\documentclass{article}

\usepackage[english]{babel}

\usepackage[letterpaper,top=2cm,bottom=2cm,left=3cm,right=3cm,marginparwidth=1.75cm]{geometry}

\usepackage{amsmath}
\usepackage{amssymb}
\usepackage{graphicx}
\usepackage[colorlinks=true, allcolors=blue]{hyperref}
\usepackage{amsthm}
\newtheorem{theorem}{Theorem}
\newtheorem{Proposition}[theorem]{Proposition}
\newtheorem{lemma}{Lemma}
\theoremstyle{remark}
\newtheorem{remark}{Remark}

\title{A logarithmic convexity approach to quantitative unique continuation for the complex Ginzburg-Landau operator}
\author{Yueliang Duan\thanks{Department of Mathematics, Shantou University, Shantou 515063, China;
e-mail: ylduan@stu.edu.cn. This work was supported by the National Natural Science Foundation of China under grant 12201379, the Guangdong Basic and Applied Basic Research Foundation (2026A1515012322) and the Guangdong Provincial Department of Education (2025KCXTD013).} ,
\quad Borui Liu
\thanks{School of Mathematics and Statistics,
Wuhan University, Wuhan 430072, China; e-mail: lbr\_1419123271@163.com} ,
\quad Can Zhang
\thanks{School of Mathematics and Statistics,
Wuhan University, Wuhan 430072, China;
e-mail: canzhang@whu.edu.cn. This work was supported by the
National Natural Science Foundation of China under grant 12422118.}}

\begin{document}
\maketitle

\begin{abstract}
We establish quantitative unique continuation estimates for solutions of the complex Ginzburg-Landau equation in the framework of two-sphere and one-cylinder inequalities. While prior work \cite{Dou-Fu-Liao-Zhu} relied on Carleman estimates, the present study develops a novel logarithmic convexity approach to prove the quantitative unique continuation property. We derive an explicit quantitative unique continuation constant and fully characterize its dependence on the parameters.
\end{abstract}

\noindent\textbf{Mathematics Subject Classifications (2020)}
35K05, 93B07, 93C20
\medskip

\noindent\textbf{Keywords.}
Quantitative unique continuation, Ginzburg-Landau operator, logarithmic convexity

\section{Introduction and the main result}

Unique continuation is one of the fundamental properties in partial differential equation theory. A large body of work has addressed quantitative unique continuation for parabolic equations, with key contributions from \cite{Escauriaza,P-W,POON,Zhu}. Such research aims to establish quantitative bounds that link the local profile of a solution to its global behavior.

The global Carleman estimate method and the spectral method are two important approaches for studying the quantitative unique continuation property of parabolic equations. The Carleman estimate method, first proposed by Fursikov and Imanuvilov \cite{ikov}, adopts exponentially decaying or growing weight functions to amplify the solution’s behavior in target domains. The spectral method, invented by Lebeau and Robbiano \cite{Lebeau-Robbiano}, utilizes the concept of infinite-step iteration together with the exponential decay of solutions to the equation. Additional relevant literature can be found in \cite{Escauriaza-Fernandez,ikov,Rousseau-Robbiano}.
Besides the Carleman estimate method and the spectral method, the frequency function method and the propagation of smallness method are also important approaches for unique continuation analysis. The frequency function method establishes quantitative unique continuation via the monotonicity property of the frequency function, which converts local smallness of solutions into global control. We refer to \cite{L-Y,Phung-Wang,P-W,POON} for its applications to parabolic equations. Over recent years, the propagation of smallness has grown into a major research branch in quantitative unique continuation for parabolic equations. Generalizing from H$\ddot{\mathrm{o}}$lder-type propagation results for elliptic equations, this approach circumvents the analyticity requirement. For further details, see \cite{B-M1,B-M2,Duan-Yu-Zhang,Logunov-Malinnikova}.

The logarithmic convexity method is a powerful tool for analyzing quantitative unique continuation for parabolic equations. It relies on constructing an energy-associated logarithmically convex function in spacetime. Using the inherent parabolic structure, one can verify the convexity of its logarithm with respect to a prescribed parameter. Such convexity ensures stable propagation of the solution’s local smallness across the global domain, thereby deriving the desired quantitative unique continuation estimates (see, e.g., \cite{Bardos-Phung,Buffe-Phung,K-D-Phung}).
In this work, we develop a refined logarithmic convexity method to establish quantitative unique continuation results for the Ginzburg-Landau operator.

The Ginzburg-Landau operator originated from a phenomenological model for superconductivity introduced in \cite{El}. At present, it serves to model a broad range of physical systems: superconductivity, superfluidity, Bose-Einstein condensation, liquid crystals, field-theoretic cosmic strings and instability waves (see, e.g., \cite{Aranson, Rosenstein}). It ranks among the most thoroughly investigated nonlinear operators in both physics and mathematics. We refer to \cite{Guo-Jiang-Li} for a comprehensive account of its well-posedness and other fundamental properties.
Controllability analysis of the Ginzburg-Landau equation has become an active research topic. Most existing studies, such as \cite{Dou-Fu-Liao-Zhu,fu3,rosier}, employ global Carleman estimates. To the best of our knowledge, the frequency function method has not yet been utilized to prove quantitative unique continuation for this equation. The primary challenge is the construction of proper weight functions.

Our main focus is on solutions to linear Ginzburg-Landau equations of the form
\begin{equation}\label{1.1}
\begin{array}{lll}
\partial_{t}\varphi-(a+ib)\Delta\varphi+V(x,t)\varphi=0 &\mathrm{in}\ Q_{4}=B_{4}(0)\times(0,16),
\end{array}\end{equation}
where $B_{4}(0)\subset \mathbb R^{N}$ (with$N\geq1$) is the open ball with center at the origin and radius, and $a>0$, $b\in\mathbb R$, $V(\cdot, \cdot)\in L^\infty(\mathbb{R}^{N}\times(0,T);\mathbb C)$ and $i:=\sqrt{-1}$ denotes the imaginary unit.

The main result of this paper concerning the quantitative estimate of unique continuation for solutions of \eqref{1.1} is stated as follows.
\begin{theorem}\label{Thm1}
Let $\varphi$ be a non-trivial solution to \eqref{1.1}.
Then there are two positive constants $C:= C(r)$ and $\gamma:=\gamma(r)\in (0,1)$ so that for any $T\in (0,16]$ and for any $r\in(0,1/2)$,
\begin{eqnarray*}
\int_{B_{2r}}|\varphi(x,T)|^{2}\mathrm{d}x\leq e^{C\left(K_1
+K_2T^{-1}+\|V\|^{2/3}_{{\infty}}+T\|V\|_{\infty}\right)}\left(\int_{B_r} |\varphi(x,T)|^{2}\mathrm{d}x\right)^{1-\gamma}\left({\int_{T/2}^{T}\int_{B_{4r}}}|\varphi(x,s)|^{2}\mathrm dx\mathrm ds\right)^{\gamma}.
\end{eqnarray*}
where $B_{2r}:=B_{2r}(0)$, $K_{1}:=1+a^3+b^2+b^3/a^2+b^{6}/a^{3}$ and $K_{2}:=
1+|b|/a+a/(a^2+b^2)$.
\end{theorem}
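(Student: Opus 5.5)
We will follow the Phung–Wang/Escauriaza style logarithmic convexity scheme, adapted to the complex diffusion coefficient $a+ib$. First we localize. Take a cutoff $\chi\in C_c^\infty(B_{4r})$ with $\chi\equiv1$ on $B_{3r}$ and set $u=\chi\varphi$. Then $u$ solves
\[
\partial_t u-(a+ib)\Delta u+Vu=f,\qquad f:=-(a+ib)\bigl(2\nabla\chi\cdot\nabla\varphi+\Delta\chi\,\varphi\bigr).
\]
Here $f$ is supported in the annulus $B_{4r}\setminus B_{3r}$. A Caccioppoli estimate for \eqref{1.1} bounds $\int_{T/2}^T\!\int|f|^2$ by $C(r)\,(a^2+b^2)\bigl(1+a^{-1}(\dots)+\|V\|_\infty\bigr)\int_{T/2}^{T}\!\int_{B_{4r}}|\varphi|^2$. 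To get this, multiply the equation by $\bar\varphi\eta^2$ and take real parts. The term coming from $ib$ produces $|b|\,|\nabla\varphi||\varphi|$, which we absorb into $a|\nabla\varphi|^2$. This is one source of the $b^2/a$-type constants in $K_1,K_2$.

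Next we use a backward Gaussian weight. For $s\in(T/2,T]$ and a parameter $\lambda>0$, set
\[
G_\lambda(x,s)=\frac{1}{(T-s+\lambda)^{N/2}}\exp\Bigl(-\frac{a|x|^2}{4(a^2+b^2)(T-s+\lambda)}\Bigr).
\]
The exponent is the real part of the fundamental solution of the adjoint operator $\partial_s+(a-ib)\Delta$. With this weight we define $H(s)=\int|u|^2G_\lambda\,dx$ and $D(s)=\int a|\nabla u|^2G_\lambda\,dx$, together with the frequency $N(s)=(T-s+\lambda)D/H$. The core step is a differential inequality of the form
\[
\frac{d}{ds}N(s)\le C\bigl(1+\|V\|_\infty+\tfrac{b^2}{a}\,\cdots\bigr)N(s)+\text{error from } f,
\]
together with
\[
H'(s)= -2D(s)+O\bigl(\|V\|_\infty H+\text{(error)}\bigr).
\]
We obtain both by integrating by parts and using a Rellich–Pohozaev identity against $x\cdot\nabla u$, computed on the real part. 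The cross terms $\operatorname{Im}$ coming from $ib\Delta u$ will not cancel, as they do in the real heat case. We plan to rewrite them via the equation, trading $ib\Delta u$ for $\partial_t u+Vu-f$ up to the factor $a/(a^2+b^2)$, and to use Young's inequality. We expect this step to be the main obstacle: to choose the Gaussian weight (the real coefficient $a/(a^2+b^2)$ and possibly a phase) so that the non-symmetric terms either cancel or are bounded by $N$ times constants polynomial in $a,b,b/a$. Those polynomial constants are what would produce $K_1$.

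Integrating the frequency bound gives the logarithmic convexity, i.e. a three-time-level (two-ball) inequality
\[
H(T)\ \ge\ \bigl(\ldots\bigr)\quad\text{equivalently}\quad \int|u(T)|^2G_\lambda(T)\le e^{C(\cdots)}\Bigl(\int|u(T)|^2G_\lambda(T)\Bigr)^{\!\theta}\Bigl(\int_{T/2}^T H\Bigr)^{1-\theta}
\]
in the doubling form $N(T)\lesssim e^{C T\|V\|_\infty}(1+N(T/2)+\dots)$. We then choose $\lambda$ comparable to $r^2$ times the factor involving $a$ and $b$. At the final time, $G_\lambda(\cdot,T)$ is large on $B_r$ and small, like $e^{-c r^2/\lambda}$, outside $B_{2r}$. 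The $f$-contributions live in $|x|\ge 3r$, where the weight is exponentially smaller; they are controlled by the $L^2(B_{4r}\times(T/2,T))$ norm of $\varphi$.

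Finally we optimize. A standard interpolation step balances $e^{c/\lambda}$ against the ratio between the $B_r$ norm and the cylinder norm. This yields the Hölder exponent $\gamma(r)$ and the constant $\exp\bigl(C(K_1+K_2T^{-1}+T\|V\|_\infty+\|V\|_\infty^{2/3})\bigr)$. The $T\|V\|_\infty$ term comes from Gronwall over an interval of length $T/2$. The $\|V\|_\infty^{2/3}$ term comes from the case $T$ large: there we restrict to a sub-interval of length $\sim\|V\|_\infty^{-1/3}$ and optimize $\tau^{-1}+\tau\|V\|_\infty\cdot$(weight) to get the $2/3$ power. The $T^{-1}$ term with $K_2$ arises from the weight's time scale.
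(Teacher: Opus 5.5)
Your outline has the right architecture: cutoff, backward Gaussian weight, a frequency-type differential inequality, and optimization in $\lambda$. Your exponent $a/(4(a^2+b^2))$ is exactly the $\varepsilon_0\to0$ limit of the paper's $\gamma_0=\frac{2a+(a-|b|)\varepsilon_0}{8(a^2+b^2)}$. But the step you flag as ``the main obstacle'' is the real content of the proof, and you leave it unresolved.

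The paper resolves it by conjugation. It sets $f=e^{y/2}u$ with $y=-\gamma_0|x|^2/\Gamma$ and $\Gamma=T-t+\lambda$, and splits the conjugated operator into a symmetric part $\mathcal S$ and a skew-symmetric part $\mathcal A$. The $ib$-part produces the first-order term $i\frac{b\gamma_0}{\Gamma}[2x\cdot\nabla+n]$, which is symmetric and is placed in $\mathcal S$. Hence the frequency that works, $N=\langle-\mathcal Sf,f\rangle/\|f\|^2$, is not $\Gamma\int a|\nabla u|^2G_\lambda/H$. Written in terms of $u$, it also contains $\frac{2b\gamma_0}{\Gamma}\mathrm{Im}\int e^{y}\bar u\,x\cdot\nabla u\,dx$ and a weighted $|x|^2|u|^2$ term.

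Instead of a Rellich--Pohozaev computation, the paper uses the abstract inequality of Lemma~\ref{lemma-2.5} and computes $\partial_t\mathcal S+[\mathcal S,\mathcal A]$ explicitly in \eqref{2.36b}. Its imaginary first-order part $i\frac{2b\gamma_0}{\Gamma^2}[2x\cdot\nabla+n]$ is cancelled exactly by adding $\frac{2}{\Gamma}\mathcal S$. Backing off to $\frac{2-\varepsilon_0}{\Gamma}\mathcal S$ leaves a remainder that Cauchy--Schwarz absorbs for the stated $\varepsilon_0,\gamma_0$. This gives $N'\le\frac{2-\varepsilon_0}{\Gamma}N+\frac{\|-Vf+e^{y/2}g\|^2}{2\|f\|^2}$.

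So the inequality you can actually get is $\Gamma$-singular. Its rate is $(2-\varepsilon_0)/\Gamma$, not the heat rate $1/\Gamma$ behind the classical monotonicity of $\Gamma D/H$, and it is not a $\lambda$-independent Gronwall bound $N'\le C(a,b,V)N$. It becomes a three-time interpolation through the Bardos--Phung lemma (Lemma~\ref{lemma4}) with $C_0=1-\varepsilon_0$. The condition $C_0<1$ is essential: it gives $M_\ell=O(\ell^{1-\varepsilon_0})$, so one can choose $\ell$ with $R_0^2(1+M_\ell)/(\ell+1)\le r^2/2$.

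The second missing ingredient is control of the cutoff error. It enters as $\|e^{y/2}g(t)\|^2/\|f(t)\|^2$ and $\mathrm{Re}\langle e^{y/2}g,f\rangle/\|f\|^2$, that is, divided by the weighted norm of the solution at time $t$. Smallness relative to $\int_{T/2}^T\int_{B_{4r}}|\varphi|^2$ is therefore not enough. You need a lower bound for $\int_{B_{(1+\delta)R}}|\varphi(x,t)|^2$ uniformly on the time window.

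The paper gets this from Proposition~\ref{lemma-1.3}. There $h_0$ is defined through the logarithm of the ratio of the cylinder norm to $\int_{B_r}|\varphi(x,T)|^2$. For $t\in[T-h_0,T]$ the lower bound holds with constant $e^{1+C_5/((a+b^2/a)h_0)}$. Restricting to $2\ell\lambda\le c\,h_0$ lets the Gaussian factor $e^{-\mathcal K_1/(2\ell\lambda)}$ absorb this constant. At the end, $e^{C/h_0}$ is unwrapped into a power of that ratio, which is where the H\"older exponent comes from.

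This is also the source of $\|V\|_\infty^{2/3}$, not your sub-interval heuristic. Since $h_0\|V\|_\infty^{2/3}<\hat C_3$ is built into $h_0$, the term $(\ell\lambda)^2\|V\|_\infty^2$ in $D_\ell$, coming from $\|Vf\|^2/\|f\|^2$, is $\lesssim\|V\|_\infty^{2/3}$. Without these two pieces the argument does not close.
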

\begin{remark}
It is worth noting that by taking $a=1$ and $b=0$ in \eqref{1.1}, the equation \eqref{1.1} reduces to the heat equation with a bounded potential. In the study of the heat equation, Lemma 3.2 in \cite{Duan-Wang-Zhang} provides a result analogous to Theorem \ref{Thm1} of the present paper. This shows that our work actually extends the quantitative unique continuation property of the heat equation.
\end{remark}

\begin{remark}
When $a=0$ and $b=1$, the equation \eqref{1.1} reduces to the Schr$\ddot{\mathrm{o}}$dinger equation. A unique continuation inequality for the Schr$\ddot{\mathrm{o}}$dinger equation is provided in Theorem 1.3 of \cite{WGS-WM-ZYB}. However, this unique continuation inequality is not of the two-sphere and one-cylinder type: on one hand, the spatial domain is the whole space $\mathbb{R}^N$ instead of a cylinder; on the other hand, the $L^2$ norm of $u_0$ is replaced by the $L^2$ norm of $e^{a|x|}u_0$. Indeed, in the present paper, when we set $b=1$ and let $a \to 0$, the quantitative uniqueness constant tends to infinity. These two facts suggest that, for the Schr$\ddot{\mathrm{o}}$dinger equation, a two-sphere and one-cylinder inequality may not hold.
\end{remark}

\begin{remark}
Using a global Carleman estimate approach, the authors of \cite{Dou-Fu-Liao-Zhu} obtained results similar to those presented in this paper. For dimensions $N=2,3$ and under suitable conditions on parameters $a$ and $b$, the observability is established (see Theorem 4.1 therein).
In contrast, this paper employs a log-convexity method to derive an observability inequality for equation \eqref{1.1} at a specific time.
Our approach relaxes the dimension requirement to $N\geq1$ and imposes no restrictions on the parameters $a$ and $b$. It should be noted that the work \cite{Dou-Fu-Liao-Zhu} deals with a nonlinear Ginzburg-Landau equation, whereas the method presented here is currently only applicable to the linear Ginzburg-Landau equation. Unfortunately, for the nonlinear case, we have not yet found a suitable approach for investigation.
\end{remark}

The structure of the rest of the paper is as follows. Section \ref{Aux} is devoted to two local energy estimates and a related proposition that is essential for the proof of Theorem \ref{Thm1}. Section \ref{pre} shows some elementary lemmas related to the log-convexity method. Section \ref{pr3} gives the proof of Theorem \ref{Thm1}. In Section \ref{Conclusions}, we present a summary and discuss some applications of Theorem \ref{Thm1} in the control theory of the Ginzburg-Landau equation. In the appendix, we provide the detailed proofs of Lemmas \ref{lemma-1.1} and \ref{lemma-1.2}.

\section{Auxiliary lemmas}\label{Aux}

In this section, we first show two standard energy estimates (see Lemmas \ref{lemma-1.1} and \ref{lemma-1.2} below). For the sake of completeness, we provide their detailed proofs in the appendix.

\begin{lemma}\label{lemma-1.1}
There is a constant $C_{1}>1$ so that
the solution  $\varphi$ of (\ref{1.1}) satisfies
\begin{equation}\label{1.2}
\begin{array}{lll}
&&\displaystyle{\max_{t\in[T-\tau_{1},T]}}\int_{B_{r}(0)}|\varphi(x,t)|^{2}\mathrm dx
+a\int_{T-\tau_{1}}^{T}\int_{B_{r}(0)}| \nabla\varphi(x,s)|^{2}\mathrm dx\mathrm ds\\
\\
&\leq& C_1 \left[\left(2a+\frac{|b|}{a}\right)(R-r)^{-2}+(\tau_{2}-\tau_{1})^{-1}+\|V\|_{\infty}\right]
\displaystyle{\int_{T-\tau_{2}}^{T}\int_{B_{R}(0)}}|\varphi(x,s)|^{2}\mathrm dx\mathrm ds,
\end{array}
\end{equation}
with $0<r<R\leq4$, $0<\tau_{1}<\tau_{2}<T\leq16$.
\end{lemma}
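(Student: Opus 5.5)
This is a Caccioppoli-type energy estimate. The plan is to test \eqref{1.1} against $\bar\varphi$ times a space-time cutoff, take real parts, and absorb the cross term produced by the imaginary diffusion coefficient $ib$.

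\textbf{Cutoffs and the basic identity.} Choose a spatial cutoff $\eta\in C_c^\infty(B_R)$ with $0\le\eta\le1$, $\eta\equiv1$ on $B_r$ and $|\nabla\eta|\le 2(R-r)^{-1}$. Choose a temporal cutoff $\theta\in C^\infty$ with $\theta=0$ on $[0,T-\tau_2]$, $\theta=1$ on $[T-\tau_1,T]$, $0\le\theta\le1$ and $0\le\theta'\le 2(\tau_2-\tau_1)^{-1}$. Multiply \eqref{1.1} by $\theta\eta^2\bar\varphi$, integrate over $B_R$, and take real parts. Using $\mathrm{Re}(\partial_t\varphi\,\bar\varphi)=\tfrac12\partial_t|\varphi|^2$ and integrating by parts in $x$ gives
\[
\frac12\frac{d}{dt}\int\theta\eta^2|\varphi|^2-\frac12\int\theta'\eta^2|\varphi|^2+a\int\theta\eta^2|\nabla\varphi|^2+\mathrm{Re}\Big[(a+ib)\int\theta\,2\eta\bar\varphi\,\nabla\eta\cdot\nabla\varphi\Big]+\mathrm{Re}\int\theta V\eta^2|\varphi|^2=0 .
\]
The key observation is that the $ib$ part of the principal term $\int|\nabla\varphi|^2\eta^2$ is purely imaginary, so it disappears on taking real parts; only the cross term involving $\nabla\eta$ retains $b$.

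\textbf{Absorbing the cross term.} Bound the cross term by $2(a+|b|)\int\theta\eta|\nabla\varphi||\varphi||\nabla\eta|$ and apply Young's inequality with weight $a/2$:
\[
2(a+|b|)\,\eta|\nabla\varphi||\varphi||\nabla\eta|\le\frac a2\eta^2|\nabla\varphi|^2+\frac{2(a+|b|)^2}{a}|\nabla\eta|^2|\varphi|^2 .
\]
Since $(a+|b|)^2/a\le 2a+2b^2/a$, this yields a constant of the form $a+b^2/a$. The statement, however, has $|b|/a$, so the $b$ part needs a finer treatment. The fix is to integrate the $ib$ cross term by parts once more: $\mathrm{Re}\big(ib\,\bar\varphi\nabla\varphi\big)=-b\,\mathrm{Im}(\bar\varphi\nabla\varphi)$. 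We then either accept a constant depending on $b^2/a$ (which is also harmless downstream, since $K_1$ already contains $b^2$), or we apply Young with an $a$-independent split,
\[
|b|\,\eta|\nabla\eta||\varphi||\nabla\varphi|\le\frac a4\eta^2|\nabla\varphi|^2+\frac{b^2}{a}|\nabla\eta|^2|\varphi|^2 ,
\]
and adjust constants. I would present the estimate with the constant that results, and check that matching the exact factor $2a+|b|/a$ is a matter of normalizing $(R-r)\le4$ and absorbing factors into $C_1$. Matching this exact parameter dependence is the step I expect to be the main obstacle.

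\textbf{Conclusion.} The potential term is bounded by $\|V\|_\infty\int\theta\eta^2|\varphi|^2$. Integrate in time from $T-\tau_2$ (where $\theta=0$) up to any $t\in[T-\tau_1,T]$. This bounds $\int_{B_r}|\varphi(t)|^2+a\int_{T-\tau_2}^{t}\int\theta\eta^2|\nabla\varphi|^2$ by $C\big[(a+\cdot)(R-r)^{-2}+(\tau_2-\tau_1)^{-1}+\|V\|_\infty\big]\int_{T-\tau_2}^T\int_{B_R}|\varphi|^2$. Taking the maximum over $t$ and restricting the gradient integral to $[T-\tau_1,T]\times B_r$, where $\theta\eta=1$, gives \eqref{1.2}.
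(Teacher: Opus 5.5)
You follow the paper's route: test against $\eta^2\bar\varphi$ times a time cutoff (your $\theta$ plays the role of the paper's $\xi^2$), take real parts so that the $ib$ part of the principal term drops out, apply Young's inequality to the cross term, integrate from $T-\tau_2$, and restrict to $B_r\times[T-\tau_1,T]$. Everything through your Young step is correct. It proves \eqref{1.2} with $(2a+|b|/a)$ replaced by a multiple of $a+b^2/a$.

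The gap is your last step, where you claim this can be brought to the printed factor $2a+|b|/a$ by using $R-r\le 4$ and enlarging $C_1$. The mismatch is in $b$, not in $R-r$. The inequality $b^2/a\le C(2a+|b|/a)$ fails as $|b|\to\infty$ with $a$ fixed. Your rewriting $\mathrm{Re}(ib\,\bar\varphi\nabla\varphi)=-b\,\mathrm{Im}(\bar\varphi\nabla\varphi)$ is only an identity, not an integration by parts, so it changes nothing. The real part $\mathrm{Re}(\bar\varphi\nabla\varphi)=\frac12\nabla|\varphi|^2$ could be integrated by parts. The current $\mathrm{Im}(\bar\varphi\nabla\varphi)$ is not a gradient, however, so the $b$-term has to be absorbed into $a\int\theta\eta^2|\nabla\varphi|^2$, and that forces $b^2/a$.

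In fact no proof can give $|b|/a$, because the statement as printed is false. Take $V=0$, $a=1$ and $\varphi(x,t)=\exp\big((1-ib)(x_1+(1+b^2)t)\big)$. Since $(1+ib)(1-ib)^2=(1+b^2)(1-ib)$, this solves \eqref{1.1}, and $|\varphi|^2=e^{2x_1+\kappa t}$ with $\kappa=2(1+b^2)$. Choose $r=1$, $R=2$, $T=16$, $\tau_1=1$, $\tau_2=15$. The left side of \eqref{1.2} is at least $e^{\kappa T}\int_{B_1}e^{2x_1}\,\mathrm dx$. On the other side, $\int_{1}^{16}\int_{B_2}|\varphi|^2\,\mathrm dx\,\mathrm ds\le\kappa^{-1}e^{\kappa T}\int_{B_2}e^{2x_1}\,\mathrm dx$. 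So \eqref{1.2} would give $2(1+b^2)\le C(2+|b|+1/14)$ with $C$ independent of $b$, which is false for large $|b|$.

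The paper's own proof has the same defect. Its Young step likewise yields only a constant of order $a+b^2/a$; compare the factor $2a+b^2/a$ the paper itself obtains in the analogous computation in the proof of Proposition~\ref{lemma-1.3}. You should therefore state and prove the lemma with $b^2/a$ in place of $|b|/a$, which is exactly what your argument delivers, rather than try to match the printed constant.

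Your aside that this is harmless downstream is also not automatic. In $h_0$ of Proposition~\ref{lemma-1.3}, this factor multiplies $(\tau_2-\tau_1)^{-1}$ and $\|V\|_\infty^{2/3}$ in the exponent. The $T^{-1}$ coefficient in the final constant would then involve $b^2/a$ rather than $|b|/a$.
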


\begin{lemma}\label{lemma-1.2}
There is a constant $C_{2}>0$ so that
the solution $\varphi$ of (\ref{1.1}) satisfies
\begin{equation}\label{1.3}
\begin{array}{lll}
&&\displaystyle{\max_{t\in[T-\tau,T]}}\int_{B_{R}(0)}| \nabla\varphi(x,t)|^{2}\mathrm{d}x\\
\\
&\leq & \displaystyle{}C_{2}\left[1+a^{-1}\tau^{-2}+(a^2+b^2)^{2}a^{-3}R^{-4}+a^{-1}\|V\|^{2}_{\infty}\right]
\int_{T-2\tau}^{T}\int_{B_{2R}(0)}|\varphi(x,s)|^{2}
\mathrm{d}x\mathrm{d}s,
\end{array}
\end{equation}
with $0<R\leq4$, $0<\tau<T/2\leq8$.
\end{lemma}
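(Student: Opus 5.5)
We prove Lemma \ref{lemma-1.2} by a standard Caccioppoli-type energy argument at the level of $\nabla\varphi$, combined with Lemma \ref{lemma-1.1} to control the lower-order terms.

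\textbf{Choice of cutoffs.} Fix a spatial cutoff $\eta\in C_0^\infty(B_{3R/2})$ with $\eta\equiv1$ on $B_R$, $|\nabla\eta|\le C/R$ and $|\Delta\eta|\le C/R^2$. Fix a temporal cutoff $\theta(t)$ with $\theta=0$ for $t\le T-2\tau+\tau/2$, $\theta=1$ on $[T-\tau,T]$, and $|\theta'|\le C/\tau$.

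\textbf{Step 1: the gradient energy identity.} Multiply \eqref{1.1} by $-\theta\eta^2\Delta\bar\varphi$, integrate over $B_{2R}$, and take real parts. Since $\mathrm{Re}\int\partial_t\varphi\,(-\eta^2\Delta\bar\varphi)=\frac12\frac{d}{dt}\int\eta^2|\nabla\varphi|^2+\mathrm{Re}\int 2\eta\,\partial_t\varphi\,\nabla\eta\cdot\nabla\bar\varphi$, this gives
\begin{equation*}
\frac12\frac{d}{dt}\int\theta\eta^2|\nabla\varphi|^2+a\int\theta\eta^2|\Delta\varphi|^2
=\frac{\theta'}{2}\int\eta^2|\nabla\varphi|^2-\mathrm{Re}\int 2\theta\eta\,\partial_t\varphi\,\nabla\eta\cdot\nabla\bar\varphi+\mathrm{Re}\int\theta\eta^2V\varphi\Delta\bar\varphi.
\end{equation*}
The $ib$ term drops out because $\mathrm{Re}\,(ib|\Delta\varphi|^2)=0$.

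\textbf{Step 2: the cross term.} This is the delicate step. Substitute $\partial_t\varphi=(a+ib)\Delta\varphi-V\varphi$ into the cross term. It then becomes
\begin{equation*}
\le C\sqrt{a^2+b^2}\int\theta\eta|\Delta\varphi||\nabla\eta||\nabla\varphi| + C\|V\|_\infty\int\theta\eta|\varphi||\nabla\eta||\nabla\varphi|.
\end{equation*}
Apply Young's inequality to absorb $\frac a4\int\theta\eta^2|\Delta\varphi|^2$. This costs $\frac{C(a^2+b^2)}{a}R^{-2}\int_{\mathrm{supp}\,\theta}\int_{B_{3R/2}}|\nabla\varphi|^2$. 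The potential term is handled the same way: it is bounded by $\frac a4\int\theta\eta^2|\Delta\varphi|^2+\frac{\|V\|_\infty^2}{a}\int\theta|\varphi|^2$, and the $V$ cross term by $\|V\|_\infty^2\int|\varphi|^2+R^{-2}\int|\nabla\varphi|^2$.

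\textbf{Step 3: integrate in time.} Integrate from $T-2\tau$ to $t\in[T-\tau,T]$. This yields
\begin{equation*}
\max_{[T-\tau,T]}\int_{B_R}|\nabla\varphi|^2\lesssim \Big[\tau^{-1}+\tfrac{a^2+b^2}{a}R^{-2}+R^{-2}\Big]\int_{T-3\tau/2}^T\int_{B_{3R/2}}|\nabla\varphi|^2+\big(1+a^{-1}\big)\|V\|_\infty^2\int\int_{B_{2R}}|\varphi|^2 .
\end{equation*}

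\textbf{Step 4: remove the gradient on the right.} Apply Lemma \ref{lemma-1.1} with radii $3R/2<2R$ and times $3\tau/2<2\tau$:
\begin{equation*}
a\int\int_{B_{3R/2}}|\nabla\varphi|^2\le C\Big[\big(a+\tfrac{|b|}{a}\big)R^{-2}+\tau^{-1}+\|V\|_\infty\Big]\int_{T-2\tau}^T\int_{B_{2R}}|\varphi|^2 .
\end{equation*}
Multiplying the two brackets and dividing by $a$ produces products such as $a^{-1}\tau^{-2}$, $(a^2+b^2)^2a^{-3}R^{-4}$ and $a^{-1}\|V\|_\infty^2$. It also produces cross terms, for example $\tau^{-1}\frac{a^2+b^2}{a^2}R^{-2}$ and $\frac{|b|}{a^2}\tau^{-1}R^{-2}$, which we must absorb into the stated form. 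By AM-GM,
\begin{equation*}
\tau^{-1}\frac{a^2+b^2}{a^2}R^{-2}\le \tfrac12\big(a^{-1}\tau^{-2}+(a^2+b^2)^2a^{-3}R^{-4}\big),
\end{equation*}
and the remaining mixed terms are treated similarly. For instance, $a^{-1}\tau^{-1}\|V\|_\infty\le a^{-1}(\tau^{-2}+\|V\|^2_\infty)$.

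\textbf{Main obstacle.} The hard part is this final bookkeeping: checking that every mixed product is dominated by the four listed terms. Pieces that are not naturally divided by $a$, such as $(a+|b|/a)R^{-2}$ or $\|V\|_\infty$ alone, need care. For these I would use that $R\le4$ and $\tau\le8$, together with the constant $1$ in the bracket, to absorb the lower-order leftovers.
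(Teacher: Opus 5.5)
Your architecture is the same as the paper's. You derive an $H^1$-level energy estimate from the multiplier $-\Delta\bar{(\cdot)}$, then use Lemma~\ref{lemma-1.1} to remove $\int\!\!\int|\nabla\varphi|^2$ on a larger cylinder, then do bookkeeping. The paper cuts off first ($z=\eta\xi\varphi$), so $\xi'$ lands on $\varphi$ and gives $a^{-1}\tau^{-2}$ directly, at the price of a $\Delta\eta$ term. In your version $\theta'$ lands on $|\nabla\varphi|^2$, which costs one extra AM--GM. Your Step~1 is actually the sounder derivation. The paper's displayed identity for $z$ omits $2b\int\!\!\int\mathrm{Im}(\nabla\bar z\cdot\nabla\partial_s z)$, which is not a time derivative. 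The inequality \eqref{1.4} really follows from pairing with $-\Delta\bar z$, taking real parts and using Young, exactly as you do.

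The gap is in the bookkeeping you postpone, and your proposed remedy cannot work. The bounds $R\le4$, $\tau\le8$ only bound $R^{-1}$ and $\tau^{-1}$ from below, so the constant $1$ never absorbs leftovers that grow. There are two concrete failures.

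\emph{First failure: the unweighted Young step on the $V$ cross term.} This step produces $\|V\|_\infty^2\int\!\!\int|\varphi|^2$ without the factor $a^{-1}$. The bracket does not control it for $b=0$, $R,\tau$ fixed and $\|V\|_\infty=a\to\infty$. It also produces $R^{-2}\int\!\!\int|\nabla\varphi|^2$, which after Lemma~\ref{lemma-1.1} gives at least $R^{-4}$. That is not controlled for $b=0$, $a\to0$, $R^2=a$ with $\tau,\|V\|_\infty$ fixed. Weight the Young step instead:
$2\|V\|_\infty\eta|\varphi||\nabla\eta||\nabla\varphi|\le a^{-1}\|V\|_\infty^2\eta^2|\varphi|^2+a|\nabla\eta|^2|\nabla\varphi|^2$, and then use $a\le(a^2+b^2)/a$.

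\emph{Second failure: Lemma~\ref{lemma-1.1} used as stated, with $|b|/a$.} Then the mixed terms $\frac{|b|}{a^2}\tau^{-1}R^{-2}$ (the one you single out) and $\frac{(a^2+b^2)|b|}{a^3}R^{-4}$ are \emph{not} dominated by the bracket. For $a=b=\epsilon$, $\tau=4$, $R^2=\epsilon$, $V=0$ they are of order $\epsilon^{-2}$, while the bracket is of order $\epsilon^{-1}$. The paper's final inequality contains the second product too, so it has the same defect.

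What repairs both arguments is that the Young step in the proof of Lemma~\ref{lemma-1.1} actually yields a factor $C(a+b^2/a)(R-r)^{-2}$, not $(2a+|b|/a)(R-r)^{-2}$. Absorbing the $b$-term into $a\int\!\!\int\eta^2\xi^2|\nabla\varphi|^2$ costs $b^2/a$. With this correction, $a^{-1}(a+b^2/a)R^{-2}=\frac{a^2+b^2}{a^2}R^{-2}$, and every product is handled by one of three bounds:
\begin{itemize}
\item your displayed AM--GM;
\item $a^{-1}\tau^{-1}\|V\|_\infty\le a^{-1}(\tau^{-2}+\|V\|_\infty^2)$;
\item $\frac{a^2+b^2}{a^2}R^{-2}\|V\|_\infty\le\frac12\big[(a^2+b^2)^2a^{-3}R^{-4}+a^{-1}\|V\|_\infty^2\big]$.
\end{itemize}
The estimate then closes, and the constant $1$ in the bracket is not even needed.
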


To establish our main result, we require the following auxiliary lemma, which is inspired by Lemma 3 in \cite{Phung-Wang-Zhang}.
\begin{Proposition}\label{lemma-1.3}
Let $0<2r\leq R<+\infty$ and $\delta\in (0,1]$.
Then there are two constants $C_{3}:= C_{3}(r,\delta)>0$ and
$C_{4}:=C_{4}(r,\delta)>0$ so that for any  $0<\tau_{1}<\tau_{2}<T$,
the quantity
\begin{equation}\label{1.8}
\begin{array}{lll}
h_{0}=\frac{\hat{C}_{3}}{\ln\left[(1+\hat{C}_{4})\left(e^{[1+2C_{1}(1+\frac{2a+|b|/a}{r^2})]
(1+\frac{1}{\tau_{2}-\tau_{1}}+\|V\|^{2/3}_{\infty})+\frac{4\hat{C}_{3}}{T}
+2T\|V\|_{\infty}}\right)\frac{\int_{T-\tau_{2}}^{T}\int_{B_{R}(0)}
|\varphi(x,t)|^{2}\mathrm{d}x\mathrm{d}t}{\int_{B_{r}(0)}|\varphi(x,T)|^{2}\mathrm{d}x}\right]}
\end{array}
\end{equation}
(where $\hat{C}_{3}:=(a+b^{2}/a)^{-1}C_{3}$, $\hat{C}_{4}:=(a+b^{2}/a)C_{4}$ and  $C_{1}>1$ is the constant given by Lemma~\ref{lemma-1.1}),
 has the following two properties:
  \begin{description}
\item[($i$)] \begin{equation}\label{1.9}
    0<\left(1+4\hat{C}_{3}T^{-1} +2T\|V\|_{\infty}+\|V\|^{2/3}_{\infty}\right)h_{0}<\hat{C}_{3}.
    \end{equation}
\item[($ii$)] There is a constant $C_{5}:= C_{5}(r,\delta)>C_{3}$ so that
\begin{equation}\label{1.10}
e^{2T\|V\|_{\infty}}\int_{T-\tau_{2}}^{T}\int_{B_{R}(x_{0})}|\varphi(x,s)|^{2}\mathrm{d}x\mathrm{d}s\leq e^{1+\frac{C_{5}}{(a+b^{2}/a)h_{0}}}\int_{B_{(1+\delta)r}(x_{0})}|\varphi(x,t)|^2\mathrm{d}x
\end{equation}
\end{description}
for each $t\in[T-\min\{\tau_{2},h_{0}\},T]$.
\end{Proposition}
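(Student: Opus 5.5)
We prove Proposition~\ref{lemma-1.3} by a log-convexity (frequency function) argument centred at $x_0$, in the spirit of Lemma 3 of \cite{Phung-Wang-Zhang}.

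\emph{Reduction and weights.} First remove the potential's growth by setting $u=e^{-\lambda t}\varphi$ with $\lambda\sim\|V\|_\infty$; this produces the factor $e^{2T\|V\|_\infty}$ in \eqref{1.10}. Localize with a cutoff $\chi$ equal to $1$ on $B_{(1+\delta)r}(x_0)$ and supported in $B_{R}(x_0)$, and put $f=\chi u$. Then $f$ solves $\partial_t f-(a+ib)\Delta f+Vf=g$, where $g$ is supported in the annulus $\{(1+\delta)r\le|x-x_0|\le R\}$. We control $g$ and $\nabla g$ there by Lemmas~\ref{lemma-1.1} and \ref{lemma-1.2}; this is where $C_1$, the combination $2a+|b|/a$, and $(\tau_2-\tau_1)^{-1}$ enter the exponent in \eqref{1.8}.

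\emph{Frequency function.} For $t\in[T-\min\{\tau_2,h\},T]$ use the backward Gaussian weight
\[
G_h(x,t)=(T-t+h)^{-N/2}\exp\Big(-\frac{a|x-x_0|^2}{4(a^2+b^2)(T-t+h)}\Big).
\]
The factor $a/(a^2+b^2)=\mathrm{Re}\,(a+ib)^{-1}$ is what makes the weighted energy identities close for the complex operator. Set
\[
H(t)=\int |f|^2G_h\,\mathrm dx,\qquad D(t)=\int |\nabla f|^2G_h\,\mathrm dx,\qquad \mathcal N(t)=\frac{(T-t+h)D(t)}{H(t)}.
\]
Differentiate and integrate by parts. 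The imaginary part $ib\Delta$ produces cross terms $b\,\mathrm{Im}\int \bar f\,\nabla f\cdot\nabla G_h$. We absorb these by Cauchy--Schwarz into $D$, at the cost of the factor $(a+b^2/a)$; this explains the scalings $\hat C_3=(a+b^2/a)^{-1}C_3$ and $\hat C_4=(a+b^2/a)C_4$. The $V$-terms are handled by $\|V\|_\infty\le \varepsilon^{-1/2}\|V\|^{2/3}_\infty+\ldots$-type Young splits, which generate the $\|V\|_\infty^{2/3}$ term. The aim is a differential inequality
\[
\frac{d}{dt}\mathcal N\ \ge\ -C\big(1+\|V\|^{2/3}_\infty\big)\mathcal N-\text{(error from }g\text{, exponentially small in }1/h),
\]
together with $-H'\le \frac{C}{T-t+h}\,\mathcal N H+\ldots$. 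The error from $g$ is exponentially small in $1/h$ because $G_h\lesssim e^{-c r^2/h}$ on the annulus compared with its size on $B_r$.

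\emph{Choice of $h$ and conclusion.} Integrating these inequalities gives $\mathcal N(t)\lesssim \mathcal N(T)+1$ on the interval, and then
\[
H(t)\le e^{C\mathcal N(T)+C}\,H(T).
\]
We bound $\mathcal N(T)$ by $C_3/h$ plus terms involving $\ln(\text{global}/\text{local})$. Now choose $h=h_0$ as in \eqref{1.8}. The logarithm in its denominator is precisely large enough to make the $g$-error at most half of $H(T)$, expressed through $\int_{B_r}|\varphi(x,T)|^2$. Property (i) is then immediate: by construction the logarithm exceeds $1+4\hat C_3/T+2T\|V\|_\infty+\|V\|^{2/3}_\infty$, since the exponent contains these terms and the quotient is at least $1$ after $1+\hat C_4$ is included (using Lemma~\ref{lemma-1.1} to compare the quotient). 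For (ii), take the reverse reading: combine Lemma~\ref{lemma-1.1} with the Gaussian lower bound $G_h\ge h^{-N/2}e^{-C r^2/h}$ on $B_{(1+\delta)r}$ and the upper bound on $\mathbb R^N$. This transfers $\int_{T-\tau_2}^T\int_{B_R}$ to $\int_{B_{(1+\delta)r}}|\varphi(x,t)|^2$ with loss $e^{1+C_5/((a+b^2/a)h_0)}$. Polynomial factors $h^{-N/2}$ are absorbed into the exponential.

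\emph{Main obstacle.} The hardest step is the monotonicity of $\mathcal N$ for the complex operator. The term $b\,\mathrm{Im}$ does not vanish, and one must show that, with the weight using $a/(a^2+b^2)$, it is controlled by $D$ without destroying the sign of the leading quadratic form. The plan is to first try a Rellich-type identity with multiplier $(x-x_0)\cdot\nabla\bar f$ and to bound the leftover by $\frac{b^2}{a}D$.
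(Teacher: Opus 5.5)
Your argument for (ii) runs in the wrong time direction, and this is a genuine gap. By the definition of $h_0$ you already have $\int_{T-\tau_2}^T\int_{B_R}|\varphi|^2\le e^{C_3/((a+b^2/a)h_0)}\int_{B_r}|\varphi(x,T)|^2\,\mathrm dx$. So (ii) needs a \emph{forward} propagation estimate: the mass on $B_r$ at the final time $T$ must be bounded by the mass on $B_{(1+\delta)r}$ at an earlier time $t\in[T-h_0,T]$, up to an exponentially small multiple of the global quantity.

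Your chain ends in $H(t)\le e^{C\mathcal N(T)+C}H(T)$. That bounds the earlier time by the later one, which is the backward-uniqueness direction. The ``reverse reading'' paragraph does not supply the missing inequality. Lemma~\ref{lemma-1.1} bounds a time slice by a space--time integral over a larger ball, so it cannot put $\int_{B_{(1+\delta)r}}|\varphi(x,t)|^2$ on the right-hand side. Gaussian bounds on the weight do not reverse $H(t)\le e^{\cdots}H(T)$ either.

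The intermediate steps are also unavailable. The frequency inequality one can actually prove (compare Lemmas~\ref{lemma-2.5} and~\ref{lemma-4r4}) is an \emph{upper} bound $\mathcal N'\le \frac{C}{T-t+h}\mathcal N+\cdots$, which controls later frequencies by earlier ones. Your lower bound $\mathcal N'\ge -C(1+\|V\|_\infty^{2/3})\mathcal N-\cdots$, and even $\mathcal N(t)\lesssim \mathcal N(T)+1$, fail already for the heat equation. Take $u=P_1+cP_k$, a sum of caloric polynomials of degrees $1$ and $k$ adapted to your weight; one can arrange $\mathcal N(T)\approx \frac{1}{2}$ and $\mathcal N(T-h)\approx\frac{k}{4}$. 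Finally, bounding $\mathcal N(T)=hD(T)/H(T)$ via Lemma~\ref{lemma-1.2} and $H(T)\gtrsim h^{-N/2}e^{-cr^2/h}\int_{B_r}|\varphi(x,T)|^2$ gives $e^{c/h}$ times the global/local ratio, not $C_3/h$ plus its logarithm. Exponentiating that costs a double exponential.

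No frequency function is needed here; the log-convexity machinery is used only later, for Theorem~\ref{Thm1}. The paper multiplies \eqref{1.1} by $e^{-|x|^2/h}\eta^2\overline{\varphi}$, with $\eta\in C_0^\infty(B_{(1+\delta)r})$ and $\eta=1$ on $B_{(1+3\delta/4)r}$. It absorbs all gradient cross terms, including those from $ib\Delta$, into the dissipation $2a\int e^{-|x|^2/h}\eta^2|\nabla\varphi|^2$; this is where $a+b^2/a$ enters. The result is
\[
\frac{d}{dt}\int e^{-\frac{|x|^2}{h}}|\eta\varphi|^2\,\mathrm dx\le\Big[\Big(a+\frac{b^2}{a}\Big)\frac{8(1+\delta)^2r^2}{h^2}+2\|V\|_\infty\Big]\int e^{-\frac{|x|^2}{h}}|\eta\varphi|^2\,\mathrm dx+C\Big(a+\frac{b^2}{a}\Big)e^{-\frac{(1+3\delta/4)^2r^2}{h}}\int_{B_{(1+\delta)r}}|\varphi|^2\,\mathrm dx .
\]
This is integrated forward from $t$ to $T$ over a window $T-t\le (b_2-b_3)h/((a+b^2/a)b_1)$, where $b_1=8(1+\delta)^2$, $b_2=(1+3\delta/4)^2$, $b_3=(1+\delta/2)^2$. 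On that window the growth factor is at most $e^{(b_2-b_3)r^2/h}e^{2T\|V\|_\infty}$. Using $e^{-|x|^2/h}\ge e^{-r^2/h}$ on $B_r$ then gives \eqref{1.15}. The choice $h=\frac{(a+b^2/a)b_1}{b_2-b_3}h_0$ makes the window exactly $h_0$ and the leakage term at most $e^{-1}\int_{B_r}|\varphi(x,T)|^2$. That is what the logarithm in $h_0$ is calibrated for. Your backward Gaussian could do the same job, but only through the elementary identity for $H'$ read forward in time on a window of length $c_\delta h$, not through monotonicity of $\mathcal N$.

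Two smaller points. In (i), what compensates the quotient is the term $2C_1(1+\frac{2a+|b|/a}{r^2})(1+\frac{1}{\tau_2-\tau_1}+\|V\|_\infty^{2/3})$ in the exponent, together with Lemma~\ref{lemma-1.1} applied to the pair $(r,2r)$, not the factor $1+\hat C_4$. Also, a cutoff equal to $1$ on $B_{(1+\delta)r}$ with support in $B_R$ does not exist when $\delta=1$ and $R=2r$.
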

\begin{proof}
For each $r'>0$, we write  $B_{r'}:= B_{r'}(0)$. Since $B_{2r}\subset B_{R}$ and
$$
e^{2C_{1}\left[1+(2a+\frac{|b|}{a})r^{-2}\right]\left[1+(\tau_{2}-\tau_{1})^{-1}+\|V\|^{2/3}_{\infty}\right]}
\geq C_{1} \left[(2a+\frac{|b|}{a})r^{-2}+(\tau_{2}-\tau_{1})^{-1}+\|V\|_{\infty}\right],
$$
by (\ref{1.2}) (where $R$ is replaced by $2r$), we have that
\begin{eqnarray*}
&&e^{2C_1\left[1+(2a+\frac{|b|}{a})r^{-2}\right]\left[1+(\tau_2-\tau_1)^{-1}+\|V\|^{2/3}_{\infty}\right]}
\displaystyle{\frac{\int_{T-\tau_2}^T\int_{B_R}|\varphi|^2\mathrm dx\mathrm dt}{\int_{B_r}|\varphi(x,T)|^2\mathrm dx}}\\
&\geq&C_1\left[(2a+\frac{|b|}{a})r^{-2}+(\tau_2-\tau_1)^{-1}+\|V\|_{\infty}\right]
\displaystyle{\frac{\int_{T-\tau_2}^T\int_{B_{2r}}|\varphi|^{2}\mathrm dx\mathrm dt}{\int_{B_r}|\varphi(x,T)|^2\mathrm dx}}\geq 1.
\end{eqnarray*}
Hence, (\ref{1.9}) follows immediately from (\ref{1.8}).

We now turn to the proof of (\ref{1.10}).
Let $h>0$, $\beta(x)=|x|^2$ and $\eta\in C_{0}^{\infty}(B_{(1+\delta)r})$
be such that
$$
0\leq\eta(\cdot)\leq 1\;\;\mbox{in}\;\;B_{(1+\delta)r}\;\;\mbox{and}\;\;
\eta(\cdot)=1\;\;\mbox{in}\;\;B_{(1+3\delta/4)r}.
$$
Multiplying the first equation of (\ref{1.1}) by $e^{-\beta/h}\eta^2\overline{\varphi}$ and
integrating it over $B_{(1+\delta)r}$, we get that
\begin{equation}\label{1.99999}
\begin{array}{lll}
&&\displaystyle{}\int_{B_{(1+\delta)r}}e^{-\beta/h}\eta^{2}\varphi_{t}\overline{\varphi}\mathrm{d}x
+(a+ib)\int_{B_{(1+\delta)r}}\nabla\varphi\cdot\nabla(e^{-\beta/h}\eta^2\overline{\varphi})\mathrm{d}x\\
\\
&=&\displaystyle{}-\int_{B_{(1+\delta)r}}Ve^{-\beta/h}\eta^{2}\varphi\overline{\varphi}\mathrm{d}x.
\end{array}
\end{equation}
By (\ref{1.99999}), we have that
\begin{equation}\label{I1}
\begin{array}{lll}
&&\displaystyle{}\int_{B_{(1+\delta)r}}e^{-\beta/h}\eta^{2}\overline{\varphi}_{t}\varphi\mathrm{d}x
+(a-ib)\int_{B_{(1+\delta)r}}\nabla\overline{\varphi}\cdot\nabla(e^{-\beta/h}\eta^2\varphi)\mathrm{d}x\\
\\
&=&\displaystyle{}-\int_{B_{(1+\delta)r}}\overline{V}e^{-\beta/h}\eta^{2}\varphi\overline{\varphi}\mathrm{d}x.
\end{array}
\end{equation}
Since
\begin{equation}\label{I2}
\nabla(e^{-\beta/h}\eta^{2}\overline{\varphi})
=-\frac{1}{h}e^{-\beta/h}\eta^{2}\overline{\varphi}\nabla\beta+2e^{-\beta/h}\eta\overline{\varphi}\nabla\eta+e^{-\beta/h}\eta^{2}\nabla\overline{\varphi},
\end{equation}
by (\ref{1.99999}), (\ref{I1}) and (\ref{I2}), we have that
\begin{eqnarray*}
&&\frac{\mathrm{d}}{\mathrm{d}t}\int_{B_{(1+\delta)r}}e^{-\beta/h}|\eta\varphi|^{2}\mathrm{d}x
+2a\int_{B_{(1+\delta)r}}e^{-\beta/h}|\eta\nabla\varphi|^{2}\mathrm{d}x\\
&=&2a\int_{B_{(1+\delta)r}}\frac{1}{h}e^{-\beta/h}\eta^{2}\left(\varphi_{1}\nabla\varphi_{1}\nabla\beta+\varphi_{2}\nabla\varphi_{2}\nabla\beta\right)
-2e^{-\beta/h}\eta\left(\varphi_{1}\nabla\varphi_{1}\nabla\eta+\varphi_{2}\nabla\varphi_{2}\nabla\eta\right)\mathrm{d}x\\
&&-2b\int_{B_{(1+\delta)r}}\frac{1}{h}e^{-\beta/h}\eta^{2}\left(\varphi_{1}\nabla\varphi_{2}\nabla\beta-\varphi_{2}\nabla\varphi_{1}\nabla\beta\right)
-2e^{-\beta/h}\eta\left(\varphi_{1}\nabla\varphi_{2}\nabla\eta-\varphi_{2}\nabla\varphi_{1}\nabla\eta\right)\mathrm{d}x\\
&&-2\int_{B_{(1+\delta)r}}(V+\overline{V})e^{-\beta/h}|\eta\varphi|^{2}\mathrm{d}x.
\end{eqnarray*}
Thus, we have that
\begin{eqnarray*}
&&\frac{\mathrm{d}}{\mathrm{d}t}\int_{B_{(1+\delta)r}}e^{-\beta/h}|\eta\varphi|^{2}\mathrm{d}x
+2a\int_{B_{(1+\delta)r}}e^{-\beta/h}|\eta\nabla\varphi|^{2}\mathrm{d}x\\
&\leq&2a\int_{B_{(1+\delta)r}}e^{-\beta/h}|\eta\nabla\varphi|^{2}\mathrm{d}x
+\frac{2}{h^{2}}\left(a+\frac{b^2}{a}\right)\int_{B_{(1+\delta)r}}e^{-\beta/h}\eta^{2}|\varphi|^{2}|\nabla\beta|^{2}\mathrm{d}x
\\
&&
+4\left(2a+\frac{b^2}{a}\right)\int_{B_{(1+\delta)r}}e^{-\beta/h}|\varphi|^{2}|\nabla\eta|^{2}\mathrm{d}x
+2\|V\|_{\infty}\int_{B_{(1+\delta)r}}e^{-\beta/h}|\eta\varphi|^{2}\mathrm{d}x.
\end{eqnarray*}
This, along with  Cauchy-Schwarz inequality, implies that
\begin{eqnarray*}
\frac{\mathrm{d}}{\mathrm{d}t}\int_{B_{(1+\delta)r}}e^{-\beta/h}|\eta\varphi|^{2}\mathrm{d}x
&\leq&\left[\left(a+\frac{b^{2}}{a}\right)\frac{8(1+\delta)^2 r^{2}}{h^2}+2\|V\|_{\infty}\right]\int_{B_{(1+\delta)r}}e^{-\beta/h}|\eta\varphi|^{2}\mathrm{d}x\\
&&+4\left(2a+\frac{b^{2}}{a}\right)\int_{\{x:(1+3\delta/4)r\leq\sqrt{\beta(x)}\leq(1+\delta)r\}}|\nabla\eta|^{2}e^{-\beta/h}|\varphi|^{2}\mathrm{d}x,
\end{eqnarray*}
which indicates that
\begin{eqnarray*}
\frac{\mathrm{d}}{\mathrm{d}t}\int_{B_{(1+\delta)r}}e^{-\beta/h}|\eta\varphi|^{2}\mathrm{d}x
&\leq&\left[\left(a+\frac{b^{2}}{a}\right)\frac{8(1+\delta)^2 r^{2}}{h^2}+2\|V\|_{\infty}\right]\int_{B_{(1+\delta)r}}e^{-\beta/h}|\eta\varphi|^{2}\mathrm{d}x\\
&&+4\left(2a+\frac{b^{2}}{a}\right)\|\nabla\eta\|^{2}_{\infty}e^{-\frac{(1+3\delta/4)^2 r^2}{h}}\int_{B_{(1+\delta)r}}|\varphi|^{2}\mathrm{d}x.
\end{eqnarray*}
Here and throughout the proof of Proposition~\ref{lemma-1.3},
$\|\nabla\eta\|_{\infty}:=\|\nabla\eta\|_{L^{\infty}(B_{(1+\delta)r})}$.
From the latter it follows that
\begin{eqnarray*}
&&\frac{\mathrm{d}}{\mathrm{d}t}\left\{e^{-\left[\left(a+\frac{b^{2}}{a}\right)\frac{8(1+\delta)^2 r^{2}}{h^2}+2\|V\|_{\infty}\right]t}
\int_{B_{(1+\delta)r}}e^{-\beta/h}|\eta\varphi|^{2}\mathrm{d}x\right\}\\
&\leq&4\left(2a+\frac{b^{2}}{a}\right)\|\nabla\eta\|^{2}_{\infty}e^{-\left[\left(a+\frac{b^{2}}{a}\right)\frac{8(1+\delta)^2 r^{2}}{h^2}+2\|V\|_{\infty}\right]t}
e^{-\frac{(1+3\delta/4)^2 r^2}{h}}\int_{B_{(1+\delta)r}}|\varphi|^{2}\mathrm{d}x.
\end{eqnarray*}
Integrating the latter inequality over $(t,T)$, we get that
\begin{equation}\label{1.109999}
\begin{array}{lll}
&&\displaystyle{\int_{B_{(1+\delta)r}}}e^{-\beta/h}|\eta\varphi(x,T)|^{2}\mathrm dx\\
&\leq& e^{\left[\left(a+\frac{b^{2}}{a}\right)\frac{8(1+\delta)^2 r^{2}}{h^2}+2\|V\|_{\infty}\right](T-t)}
\displaystyle{\int_{B_{(1+\delta)r}}}e^{-\beta/h}|\eta\varphi(x,t)|^{2}\mathrm{d}x\\
&&+4\left(2a+\frac{b^{2}}{a}\right)e^{\left[\left(a+\frac{b^{2}}{a}\right)\frac{8(1+\delta)^2 r^{2}}{h^2}+2\|V\|_{\infty}\right](T-t)}\|\nabla\eta\|^2_{\infty}
e^{-\frac{(1+3\delta/4)^2 r^2}{h}}\displaystyle{\int_t^T\int_{B_{(1+\delta)r}}}|\varphi|^{2}\mathrm{d}x\mathrm{d}s.
\end{array}
\end{equation}
 We simply write  $b_{1}:= 8(1+\delta)^{2}, b_{2}:= (1+3\delta/4)^{2}$
 and $b_{3}:= (1+\delta/2)^{2}.$ It is clear that $1<b_{3}<b_{2}$.
 Recall that $t\leq T$. We now suppose that a positive real number $h$ is such that
 $$
 0<T-\frac{(b_{2}-b_{3})h}{(a+b^{2}/a)b_{1}}\leq t.
 $$
 Then $b_{1}(a+b^{2}/a)(T-t)/h^{2}\leq(b_{2}-b_{3})/h$ and (\ref{1.109999}) yields
\begin{eqnarray*}
\int_{B_{(1+\delta)r}}e^{-\beta/h}|\eta\varphi(x,T)|^{2}\mathrm{d}x
&\leq& e^{\frac{(b_{2}-b_{3})r^{2}}{h}}e^{2T\|V\|_{\infty}}
\int_{B_{(1+\delta)r}}e^{-\beta/h}|\eta\varphi(x,t)|^{2}\mathrm{d}x\\
&&+4\left(2a+\frac{b^{2}}{a}\right)\|\nabla\eta\|^{2}_{\infty}e^{2T\|V\|_{\infty}}e^{\frac{-b_{3}r^{2}}{h}}
\int_{t}^{T}\int_{B_{(1+\delta)r}}|\varphi|^{2}\mathrm{d}x\mathrm{d}s.
\end{eqnarray*}
Since $\eta(\cdot)=1$ in $B_{r}$, the above estimate gives
\begin{equation}\label{1.15}
\begin{array}{lll}
\displaystyle{}\int_{B_{r}}|\varphi(x,T)|^{2}\mathrm{d}x&\leq& e^{\frac{(b_{2}-b_{3}+1)r^{2}}{h}}e^{2T\|V\|_{\infty}}
\displaystyle{\int_{B_{(1+\delta)r}}}e^{-\beta/h}|\eta\varphi(x,t)|^{2}\mathrm{d}x\\
&&+8\left(a+\frac{b^{2}}{a}\right)\|\nabla\eta\|^{2}_{\infty}e^{2T\|V\|_{\infty}}e^{\frac{-(b_{3}-1)r^{2}}{h}}
\displaystyle{\int_{t}^{T}\int_{B_{(1+\delta)r}}}|\varphi|^{2}\mathrm{d}x\mathrm{d}s
\end{array}
\end{equation}
whenever $0<T-(b_{2}-b_{3})h/[(a+b^{2}/a)b_{1}]\leq t\leq T$. Recall that $h_{0}<T$ from \eqref{1.9}. We choose $h$ as follows:
$$
h=\frac{(a+b^{2}/a)b_{1}}{b_{2}-b_{3}}h_{0}=\frac{b_{1}C_{3}/(b_{2}-b_{3})}
{\ln\left[(1+C_{4})\left(e^{\left[1+2C_{1}(1+\frac{1}{r^2})\right]
(1+\frac{1}{\tau_{2}-\tau_{1}}+\|V\|^{2/3}_{\infty})+\frac{4C_{3}}{T}+2T\|V\|_{\infty}}\right)
\frac{\int_{T-\tau_{2}}^{T}\int_{B_{R}}|\varphi|^{2}\mathrm{d}x\mathrm{d}t}
{\int_{B_{r}}|\varphi(x,T)|^{2}\mathrm{d}x}\right]}
$$
with $C_{3}:=(b_{2}-b_{3})(b_{3}-1)r^{2}/b_{1}$ and
$C_{4}:=8\|\nabla\eta\|^{2}_{\infty}$.
Then for any $t\in[T-\min\{\tau_{2},h_{0}\},T]$, we have that
\begin{equation}\label{2.211111}
\begin{array}{lll}
&&\displaystyle{}8\left(a+\frac{b^{2}}{a}\right)\|\nabla\eta\|^{2}_{\infty}e^{2T\|V\|_{\infty}}
e^{-\frac{(b_{3}-1)r^{2}}{h}}\displaystyle{}\int_{t}^{T}\int_{B_{(1+\delta)r}}|\varphi|^{2}\mathrm{d}x\mathrm{d}s\\
\\
&=&\displaystyle{}\frac{C_{4}(a+b^{2}/a)e^{2T\|V\|_{\infty}}\int_{t}^{T}\int_{B_{(1+\delta)r}}|\varphi|^{2}\mathrm{d}x\mathrm{d}s}
{[1+(a+b^{2}/a)C_{4}]\left(e^{\left[1+2C_{1}(1+\frac{1}{r^2})\right]
(1+\frac{1}{\tau_{2}-\tau_{1}}+\|V\|^{2/3}_{\infty})+\frac{4C_{3}}{T}+2T\|V\|_{\infty}}\right)
\frac{\int_{T-\tau_{2}}^{T}\int_{B_{R}}|\varphi|^{2}\mathrm{d}x\mathrm{d}s}{\int_{B_{r}}|\varphi(x,T)|^{2}\mathrm{d}x}}\\
\\
&\leq&\displaystyle{} \frac{1}{e}\displaystyle{}\int_{B_{r}}|\varphi(x,T)|^{2}\mathrm{d}x.
\end{array}
\end{equation}
(In the latter inequality, we used the facts that $(1+\delta)r\leq2r\leq R.$)

Next, on one hand, by \eqref{1.15} and \eqref{2.211111}, we get that
\begin{equation}\label{2.22222}
\left(1-\frac{1}{e}\right)\int_{B_{r}}|\varphi(x,T)|^{2}\mathrm{d}x\leq
e^{\frac{(b_{2}-b_{3}+1)(b_{2}-b_{3})r^{2}}{(a+b^{2}/a)b_{1}h_{0}}}
e^{2T\|V\|_{\infty}}\int_{B_{(1+\delta)r}}|\varphi(x,t)|^{2}\mathrm{d}x
\end{equation}
for each $T-\min{\{\tau_{2},h_{0}\}}\leq t\leq T.$
On the other hand, by \eqref{1.8}, we have that
\begin{equation*}
\frac{\int_{T-\tau_{2}}^{T}\int_{B_{R}}|\varphi|^{2}\mathrm{d}x\mathrm{d}s}
{\int_{B_{r}}|\varphi(x,T)|^{2}\mathrm{d}x}\leq e^{\frac{C_{3}}{(a+b^{2}/a)h_{0}}},
\end{equation*}
which, combined with \eqref{2.22222}, indicates that
$$
\left(1-\frac{1}{e}\right)e^{-\frac{C_{3}}{(a+b^{2}/a)h_{0}}}
\int_{T-\tau_{2}}^{T}\int_{B_{R}}|\varphi|^{2}\mathrm{d}x\mathrm{d}s\leq
e^{\frac{(b_{2}-b_{3}+1)(b_{2}-b_{3})r^{2}}{(a+b^{2}/a)b_{1}h_{0}}}
e^{2T\|V\|_{\infty}}\int_{B_{(1+\delta)r}}|\varphi(x,t)|^{2}\mathrm{d}x
$$
for each $T-\min{\{\tau_{2},h_{0}\}}\leq t\leq T.$
Since $2T\|V\|_{\infty}h_{0}<C_{3}/(a+b^{2}/a)$ (see \eqref{1.9}), the desired estimate \eqref{1.10}
follows from the latter inequality immediately with $C_{5}:=3C_{3}+(b_{2}-b_{3}+1)(b_{2}-b_{3})r^{2}/b_{1}$.
\end{proof}

\section{Logarithmic convexity}\label{pre}

In the proof of Theorem \ref{Thm1} in this paper, the three lemmas below related to the log-convexity method play a crucial role. Specifically, they are employed in key steps to control the growth of the solution, to establish the required uniform a priori estimates, and to derive the differential inequalities satisfied by the energy.

\begin{lemma}\label{lemma4}(\cite{Bardos-Phung})
Let $\hbar>0, T>0$ and $F_1, F_2 \in C([0, T])$. Consider two positive functions $y, N \in C^1([0, T])$ such that
$$
\left\{\begin{array}{l}
\left|\frac{1}{2} y^{\prime}(t)+N(t) y(t)\right| \leq\left(\frac{1}{2} N(t)+\frac{C_0}{T-t+\hbar}\right) y(t)+F_1(t) y(t) \\
N^{\prime}(t) \leq\left(\frac{1+C_0}{T-t+\hbar}\right) N(t)+F_2(t)
\end{array}\right.
$$
with $C_0> 0$. Then for any  $\ell>1$ such that $\ell \hbar<\min (1/2, T/4)$, one has
$$
y(T-\ell \hbar)^{1+M_{\ell}} \leq y(T) y(T-2 \ell \hbar)^{M_{\ell}} \mathrm{e}^{D_{\ell}}(2 \ell+1)^{3 C_0\left(1+M_{\ell}\right)}
$$
where $$D_{\ell}=3\left(1+M_{\ell}\right)\int_{T-2 \ell \hbar}^{T}(\left|F_1\right|+2\ell\hbar\left|F_2\right|) \mathrm{d} t \ and \  M_{\ell} \leq 3 \frac{(\ell+1)^{C_0}}{1-\left(\frac{2}{3}\right)^{C_0}}.$$
\end{lemma}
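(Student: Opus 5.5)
The plan is to treat $\ln y$ as the main unknown: the first inequality gives two-sided bounds on $(\ln y)'$ in terms of $N$, and the second gives a form of monotonicity for $N$. Combining them on the two adjacent intervals $I_1:=[T-2\ell\hbar,T-\ell\hbar]$ and $I_2:=[T-\ell\hbar,T]$ should produce the interpolation inequality.

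\textbf{Step 1 (two-sided bounds for $(\ln y)'$).} Dividing the first inequality by $y>0$ and opening the absolute value gives
\begin{equation*}
-\tfrac32 N-\tfrac{C_0}{T-t+\hbar}-F_1\le \tfrac{y'}{2y}\le -\tfrac12 N+\tfrac{C_0}{T-t+\hbar}+F_1 .
\end{equation*}
I integrate the upper bound over $I_1$. Since $\int_{I_1}\frac{dt}{T-t+\hbar}=\ln\frac{2\ell+1}{\ell+1}$, this gives
\begin{equation*}
\ln y(T-\ell\hbar)-\ln y(T-2\ell\hbar)\le -\int_{I_1}N+2C_0\ln\tfrac{2\ell+1}{\ell+1}+2\int_{I_1}|F_1| .
\end{equation*}
I integrate the lower bound over $I_2$. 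Since $\int_{I_2}\frac{dt}{T-t+\hbar}=\ln(\ell+1)$, this gives
\begin{equation*}
\ln y(T)-\ln y(T-\ell\hbar)\ge -3\int_{I_2}N-2C_0\ln(\ell+1)-2\int_{I_2}|F_1| .
\end{equation*}
The target $y(T-\ell\hbar)^{1+M_\ell}\le y(T)\,y(T-2\ell\hbar)^{M_\ell}e^{D_\ell}(2\ell+1)^{3C_0(1+M_\ell)}$ is equivalent to
\begin{equation*}
M_\ell\bigl[\ln y(T-\ell\hbar)-\ln y(T-2\ell\hbar)\bigr]\le \ln y(T)-\ln y(T-\ell\hbar)+D_\ell+3C_0(1+M_\ell)\ln(2\ell+1).
\end{equation*}
Given the two integrated bounds, it therefore suffices to show
\begin{equation*}
3\int_{I_2}N\le M_\ell\int_{I_1}N+(\text{error controlled by }F_2).
\end{equation*}
The logarithmic terms $2C_0\ln(\ell+1)$ and $2C_0M_\ell\ln\frac{2\ell+1}{\ell+1}$ are absorbed into $(2\ell+1)^{3C_0(1+M_\ell)}$.

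\textbf{Step 2 (monotonicity of $N$).} Multiplying the second inequality by $(T-t+\hbar)^{1+C_0}$ gives
\begin{equation*}
\frac{d}{dt}\bigl[(T-t+\hbar)^{1+C_0}N(t)\bigr]\le (T-t+\hbar)^{1+C_0}F_2(t).
\end{equation*}
Hence for $s\le t$,
\begin{equation*}
N(t)\le \Bigl(\frac{T-s+\hbar}{T-t+\hbar}\Bigr)^{1+C_0}N(s)+\int_s^t\Bigl(\frac{T-\sigma+\hbar}{T-t+\hbar}\Bigr)^{1+C_0}|F_2|\,d\sigma .
\end{equation*}
So later values of $N$ are controlled by earlier ones, which is exactly the direction Step 1 needs. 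For $t\in I_2$ and $s\in I_1$, averaging in $s$ bounds $N(t)$ by a multiple of the weight ratio times $\frac1{\ell\hbar}\int_{I_1}N$. The $F_2$ error is at most $(\text{ratio})\cdot\int_{T-2\ell\hbar}^T|F_2|$, and after integrating in $t$ over an interval of length $\le 2\ell\hbar$ it becomes the term $2\ell\hbar|F_2|$ appearing in $D_\ell$.

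\textbf{Step 3 (the main obstacle).} The difficulty is that the weight $(T-t+\hbar)^{-1-C_0}$ blows up like $\hbar^{-1-C_0}$ near $t=T$. A direct integration over all of $I_2$ therefore gives a bound of order $\ell^{1+C_0}$, whereas the lemma asserts the sharper $M_\ell\le 3(\ell+1)^{C_0}/(1-(2/3)^{C_0})$. I would first try a geometric decomposition of $I_2$ into the pieces $J_k=\{t:\ T-t+\hbar\in\bigl((2/3)^{k+1}(\ell+1)\hbar,\ (2/3)^k(\ell+1)\hbar\bigr]\}$. On each $J_k$, I compare $N$ with its average over $I_1$ (or over the preceding piece), using the factor $\bigl(\frac{(2\ell+1)\hbar}{T-t+\hbar}\bigr)^{1+C_0}$ together with the length of $J_k$. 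Summing over $k$ yields a geometric series with ratio $(2/3)^{C_0}$, which should produce the factor $(1-(2/3)^{C_0})^{-1}$ and the power $(\ell+1)^{C_0}$. The hypotheses $\ell\hbar<\min(1/2,T/4)$ guarantee that all these intervals lie inside $[0,T]$. Collecting the $F_1$ and $F_2$ contributions with the multiplier $3(1+M_\ell)$ then gives $D_\ell$, and exponentiating yields the claim.
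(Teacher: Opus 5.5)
Your Steps 1 and 2 are correct, and reducing the claim to an estimate $3\int_{I_2}N\le M_\ell\int_{I_1}N+(\text{$F_2$-error})$ is the right move. The gap is in how you compare the two integrals. This is exactly what decides whether you reach the explicit bound $M_\ell\le 3(\ell+1)^{C_0}/(1-(2/3)^{C_0})$.

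\textbf{The Step 3 diagnosis is off.} Set $w(t):=(T-t+\hbar)^{-1-C_0}$. Then $\int_{I_2}w\le \hbar^{-C_0}/C_0$, so integrating the weight exactly over $I_2$ already gives order $\ell^{C_0}$. The blow-up at $t=T$ is harmless.

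\textbf{The real loss is on $I_1$.} A uniform average in $s$ forces you to bound $(T-s+\hbar)^{1+C_0}$ by $((2\ell+1)\hbar)^{1+C_0}$. That yields $M=\frac{3(2\ell+1)^{1+C_0}}{C_0\ell}\bigl(1-(\ell+1)^{-C_0}\bigr)$. For $C_0=1$, $\ell=10$ this is about $120$, while the lemma requires $M_\ell\le 99$. It also fails for $C_0=0.9$, $\ell=10$ (about $96$ versus $85$), so the regime $C_0=1-\varepsilon_0$ used in the paper is affected. Your pieces $J_k$ are still weighted by $\bigl(\frac{(2\ell+1)\hbar}{T-t+\hbar}\bigr)^{1+C_0}$, so they only regroup the same integral and give the same constant. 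The $2/3$ in the statement does not come from a geometric series.

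\textbf{The missing idea} is to average against $w(s)\,ds$ rather than $ds$ (equivalently, to pivot at $T-\ell\hbar$). Put $\Phi(t):=(T-t+\hbar)^{1+C_0}N(t)$. For $s\in I_1$ and $t\in I_2$, your Step 2 gives $\Phi(t)\le\Phi(s)+(T-s+\hbar)^{1+C_0}\int_{T-2\ell\hbar}^T|F_2|$, because $T-\sigma+\hbar\le T-s+\hbar$ for $\sigma\ge s$. Multiply by $w(s)$ and integrate over $I_1$, using $\int_{I_1}(T-s+\hbar)^{1+C_0}w(s)\,ds=\ell\hbar$. This gives $\Phi(t)\int_{I_1}w\le\int_{I_1}N+\ell\hbar\int_{T-2\ell\hbar}^T|F_2|$, hence
\[
3\int_{I_2}N=3\int_{I_2}\Phi\,w\le M_\ell\Bigl(\int_{I_1}N+\ell\hbar\int_{T-2\ell\hbar}^T|F_2|\Bigr),\qquad M_\ell:=\frac{3\int_{I_2}w}{\int_{I_1}w}=3\,\frac{(\ell+1)^{C_0}-1}{1-\left(\frac{\ell+1}{2\ell+1}\right)^{C_0}} .
\]
\begin{itemize}
\item Since $\frac{\ell+1}{2\ell+1}<\frac23$ exactly when $\ell>1$, this $M_\ell$ is at most $3(\ell+1)^{C_0}/(1-(2/3)^{C_0})$. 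That is where both the hypothesis $\ell>1$ and the $2/3$ come from.
\item The ratio $\int_{I_2}w/\int_{I_1}w$ is sharp under mere monotonicity of $\Phi$ (take $\Phi$ constant). So no unweighted average can reach it.
\item This also repairs your $F_2$ bookkeeping. In your version the factor $\bigl(\frac{T-\sigma+\hbar}{T-t+\hbar}\bigr)^{1+C_0}$ can be as large as $(2\ell+1)^{1+C_0}$ and cannot just be dropped. Here the error is exactly $M_\ell\ell\hbar\int|F_2|$.
\end{itemize}
Inserting this into your Step 1 bounds gives
\[
(1+M_\ell)\ln y(T-\ell\hbar)\le\ln y(T)+M_\ell\ln y(T-2\ell\hbar)+2C_0(1+M_\ell)\ln(2\ell+1)+2(1+M_\ell)\int|F_1|+M_\ell\ell\hbar\int|F_2|,
\]
which is stronger than the claim.
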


\begin{lemma}\label{lemma-2.5}(\cite{luis})
Assume that $\mathcal S$ is a symmetric operator, and that $\mathcal A$ is skew-symmetric, both are allowed to depend on the time variable. Suppose $z(x, t)$ is a reasonable function.
%$z\in C_0^\infty(\mathbb R^N)$.
Then
$$
\frac{dN(t)}{dt}\leq \frac{\langle-(\partial_t\mathcal S+[\mathcal S,\mathcal A])z,z\rangle}{\|z\|^2}
+\frac{\|\partial_t z-\mathcal Sz-\mathcal Az\|^2}{2\|z\|^2}$$
where $N(t)=-\langle \mathcal Sz, z\rangle/\|z\|^2$ and $[\mathcal S,\mathcal A]$ is the commutator of $\mathcal S$ and $\mathcal A$.
\end{lemma}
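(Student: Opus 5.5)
We prove Lemma~\ref{lemma-2.5} by differentiating the quotient $N(t)$ directly, using the decomposition $\partial_t z=\mathcal Sz+\mathcal Az+g$ with $g:=\partial_t z-\mathcal Sz-\mathcal Az$. Throughout, $\langle\cdot,\cdot\rangle$ denotes the real part of the $L^2$ inner product. For this inner product, symmetry of $\mathcal S$ gives $\langle \mathcal Su,v\rangle=\langle u,\mathcal Sv\rangle$, and skew-symmetry of $\mathcal A$ gives $\langle \mathcal Au,u\rangle=0$. Set $D(t):=\langle \mathcal Sz,z\rangle$ and $H(t):=\|z\|^2$, so that $N=-D/H$. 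We assume enough regularity that all pairings make sense ("reasonable" $z$) and that $H>0$.

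\textbf{Step 1: derivatives of $H$ and $D$.} Since $\langle \mathcal Az,z\rangle=0$,
\[
H'=2\langle \partial_t z,z\rangle=2\langle \mathcal Sz,z\rangle+2\langle g,z\rangle=2D+2\langle g,z\rangle .
\]
For $D$, symmetry of $\mathcal S$ gives
\[
D'=\langle(\partial_t\mathcal S)z,z\rangle+2\langle \mathcal Sz,\partial_t z\rangle .
\]
Substituting $\partial_t z=\mathcal Sz+\mathcal Az+g$ yields $2\|\mathcal Sz\|^2+2\langle \mathcal Sz,\mathcal Az\rangle+2\langle \mathcal Sz,g\rangle$.

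\textbf{Step 2: the commutator term.} From the symmetry of $\mathcal S$ and skew-symmetry of $\mathcal A$,
\[
\langle[\mathcal S,\mathcal A]z,z\rangle=\langle \mathcal S\mathcal Az,z\rangle-\langle \mathcal A\mathcal Sz,z\rangle=\langle \mathcal Az,\mathcal Sz\rangle+\langle \mathcal Sz,\mathcal Az\rangle=2\langle \mathcal Sz,\mathcal Az\rangle .
\]
Hence
\[
D'=\langle(\partial_t\mathcal S+[\mathcal S,\mathcal A])z,z\rangle+2\|\mathcal Sz\|^2+2\langle \mathcal Sz,g\rangle .
\]

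\textbf{Step 3: quotient rule.} Write $N'=-D'/H+DH'/H^2$. The first term contributes $\langle-(\partial_t\mathcal S+[\mathcal S,\mathcal A])z,z\rangle/H$. The remaining terms are
\[
\frac{1}{H^2}\Bigl(-2\|\mathcal Sz\|^2H-2\langle \mathcal Sz,g\rangle H+2D^2+2D\langle g,z\rangle\Bigr).
\]
It therefore suffices to show this is at most $\|g\|^2/(2H)$.

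\textbf{Step 4: completing the square.} This is the only non-mechanical step. Introduce the projected component $w:=\mathcal Sz-\frac{D}{H}z$, which is orthogonal to $z$. Then
\[
\|w\|^2=\|\mathcal Sz\|^2-\frac{D^2}{H},\qquad \langle w,g\rangle=\langle \mathcal Sz,g\rangle-\frac{D}{H}\langle z,g\rangle .
\]
Consequently the bracketed quantity equals
\[
\frac{-2\|w\|^2-2\langle w,g\rangle}{H}.
\]
Now
\[
-2\|w\|^2-2\langle w,g\rangle=-2\Bigl\|w+\tfrac12 g\Bigr\|^2+\tfrac12\|g\|^2\le\tfrac12\|g\|^2,
\]
which gives exactly $\|g\|^2/(2\|z\|^2)$. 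This establishes the claimed inequality.

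The main care point is sign bookkeeping: one must keep track of the real-part inner product and of the convention $N=-\langle \mathcal Sz,z\rangle/\|z\|^2$. With the wrong sign, $\partial_t z-\mathcal Sz-\mathcal Az$ would not appear as the natural error term.
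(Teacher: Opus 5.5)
Your proof is correct. The paper gives no proof of this lemma and imports it from \cite{luis}; your argument is essentially the standard one there: quotient rule with $\partial_t z=\mathcal Sz+\mathcal Az+g$, the identity $\langle[\mathcal S,\mathcal A]z,z\rangle=2\,\mathrm{Re}\langle \mathcal Sz,\mathcal Az\rangle$, and a final completing-the-square step. The source phrases that last step via the polarization identity $2\,\mathrm{Re}\langle \mathcal Sz,\partial_t z-\mathcal Az\rangle=\tfrac12\|\partial_t z-\mathcal Az+\mathcal Sz\|^2-\tfrac12\|\partial_t z-\mathcal Az-\mathcal Sz\|^2$ followed by Cauchy--Schwarz, which is algebraically equivalent to your projection $w=\mathcal Sz-\frac{D}{H}z$.
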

For each $r'>0$, we denote $B_{r'}:= B_{r'}(0)$.  Let $\chi\in C_{0}^{\infty}(B_{R_{0}})$ be such that
$$
0\leq\chi(\cdot)\leq 1 \ \mathrm{in} \ B_{R_{0}} \ \mathrm{and} \ \chi(\cdot)=1 \ \mathrm{in} \ B_{(1+3\delta/2)R}.$$
We set $u:=\chi\varphi$. It is clear that

\begin{equation}\label{3.222}
\left\{\begin{array}{lll}
\partial_t u-(a+ib)\Delta u+V(x,t)u=g&\mathrm{in}\ B_{R_{0}}\times(0,T),\\
u=0&\mathrm{on}\ \partial B_{R_{0}}\times(0,T),\\
u(x,0)=\chi\varphi_0&\mathrm{in}\ B_{R_{0}}.
\end{array}\right.
\end{equation}
Furthermore, we define  $g:=-(a+ib)[2\nabla\chi\cdot\nabla\varphi+\varphi\Delta\chi].$

For each $\lambda>0$, throughout this paper we define $\Gamma(t)=T-t+\lambda,\;t\in[0,T].$
   Given a constant $\gamma_{0}>0$ (which will be fixed later), we let
\begin{equation*}
y(x,t)=-\frac{\gamma_{0}|x|^2}{\Gamma(t)}, \quad (x,t)\in B_{R_{0}}\times[0,T].
\end{equation*}
Then it can be easily checked that
\begin{equation}\label{2.35a}
\begin{split}
&\partial_t y=-\frac{\gamma_{0}|x|^2}{\Gamma^2(t)},
\partial_t^2 y=-\frac{2\gamma_{0}|x|^2}{\Gamma^3(t)},
\nabla y=-\frac{2\gamma_{0}x}{\Gamma(t)},
\Delta^2 y=0,\\
&\nabla\partial_ty=-\frac{2\gamma_{0} x}{\Gamma^2(t)},
\Delta y=-\frac{2n\gamma_{0}}{\Gamma(t)},
\Delta\partial_t y=-\frac{2n\gamma_{0}}{\Gamma^2(t)},
D^2 y =-\frac{2\gamma_{0}}{\Gamma(t)}\mathbb I_{n\times n}.
\end{split}
\end{equation}
Define
\begin{equation}\label{2.35b}
\mathcal S:=a\Delta-\frac{\gamma_{0}/2-a\gamma_{0}^2}{\Gamma^2(t)}|x|^2
+i\frac{b\gamma_{0}}{\Gamma(t)}\left[2x\cdot\nabla+n\right],
\end{equation}

\begin{equation}\label{sz.35b}
\mathcal A := ib\left(\Delta+\frac{\gamma^{2}_{0}}{\Gamma^{2}(t)}|x|^2\right)+ \frac{a\gamma_{0}}{\Gamma(t)}\left[2x\cdot\nabla+n\right].
\end{equation}
By \eqref{2.35a}, \eqref{2.35b} and \eqref{sz.35b}, we have that
\begin{equation}\label{2.36b}
\begin{split}
\partial_t\mathcal S+[\mathcal S,\mathcal A]
=&\frac{4\gamma_{0}(a^2+b^2)}{\Gamma(t)}\Delta-\frac{\gamma_{0}-4a\gamma_{0}^2+4(a^2+b^2)\gamma_{0}^3}{\Gamma^3(t)}|x|^2\\
&+i\frac{2b\gamma_{0}}{\Gamma^2(t)}[2x\cdot\nabla+n].
\end{split}
\end{equation}
Setting
$f=e^{\frac y 2}u$, we have (see formulas (2.12), (2.13) and (2.14) in
\cite[Lemma 3]{luis} with $\gamma=1/2$)
\begin{equation}\label{2.29a}
\partial_t f-\mathcal Sf -\mathcal Af=-Vf+e^{\frac{y}{2}}g.
\end{equation}
One can also check that for any $f_{1},f_{2}\in H^1_0(B_{R_{0}};\mathbb C)$
\begin{equation}\label{2.31C}
\langle \mathcal Af_{1},f_{2}\rangle=-\langle f_{1},\mathcal A f_{2}\rangle\
\mathrm{and}\ \langle \mathcal Sf_{1},f_{2}\rangle=\langle f_{1},\mathcal S f_{2}\rangle.
\end{equation}

Given a constant $\varepsilon_{0}\in(0,1)$ (which will be fixed later), by \eqref{2.35b} and \eqref{2.36b}, we then concern the operator
\begin{equation}\label{2.37a}
\begin{split}
&-\left(\partial_t\mathcal S+[\mathcal S,\mathcal A]\right)+(2-\varepsilon_{0})\frac{\mathcal S}{\Gamma(t)}\\
=&\frac{(2-\varepsilon_{0})a-4\gamma_{0}(a^2+b^2)}{\Gamma(t)}\Delta
+\frac{\gamma_{0}\left[1-4a\gamma_{0}
+4(a^2+b^2)\gamma_{0}^2-(2-\varepsilon_{0})(1/2-a\gamma_{0})\right]}{\Gamma^3(t)}|x|^2\\
&-i\frac{\varepsilon_{0} b\gamma_{0}}{\Gamma^2(t)}[2x\cdot\nabla+n].
\end{split}
\end{equation}
\begin{lemma}\label{lemma-4r4}
Under the above assumptions, we have
\begin{equation}\label{Y2}
\frac 12\frac{d}{dt}\|f(t)\|^2+\langle -\mathcal Sf(t),f(t)\rangle=-\text{Re}(Vf(t),f(t))+\frac{1}{2}\int_{B_{R_{0}}}e^{\frac{y}{2}}[g\overline{f}+f\overline{g}]\mathrm{d}x
\end{equation}
and
\begin{equation}\label{Y1}
\frac{d}{dt}N(t)\leq\frac{2-\varepsilon_{0}}{\Gamma(t)}N(t)+\frac{\|-Vf+e^{\frac{y}{2}}g\|^{2}}{2\|f\|^{2}}
\end{equation}
where
\begin{equation*}\label{gutou2}
N(t):=\frac{\langle -\mathcal Sf(t),f(t)\rangle}{\|f(t)\|^2},\quad t\in[0,T].
\end{equation*}
\end{lemma}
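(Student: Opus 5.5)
For \eqref{Y2}, I take the real part of the $L^2$ inner product of \eqref{2.29a} with $f$. Since $f(t)\in H^1_0(B_{R_0};\mathbb C)$, we have $\mathrm{Re}\langle \partial_t f,f\rangle=\frac12\frac{d}{dt}\|f\|^2$. By \eqref{2.31C}, $\langle\mathcal A f,f\rangle=-\langle f,\mathcal A f\rangle=-\overline{\langle \mathcal Af,f\rangle}$, so this term is purely imaginary and drops out of the real part. Also by \eqref{2.31C}, $\langle\mathcal Sf,f\rangle$ is real. The right-hand side then gives $-\mathrm{Re}(Vf,f)+\mathrm{Re}\int e^{y/2}g\overline f\,\mathrm dx$, and $\mathrm{Re}\,z=\frac12(z+\bar z)$ produces the symmetric form $\frac12\int e^{y/2}[g\overline f+f\overline g]$. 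This is a direct computation.

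For \eqref{Y1}, I apply Lemma \ref{lemma-2.5} with $z=f$, using that $\mathcal S$ is symmetric and $\mathcal A$ skew-symmetric by \eqref{2.31C}. By \eqref{2.29a}, $\partial_tf-\mathcal Sf-\mathcal Af=-Vf+e^{y/2}g$, so the second term of Lemma \ref{lemma-2.5} is exactly $\|-Vf+e^{y/2}g\|^2/(2\|f\|^2)$. It therefore remains to show
$$
\langle-(\partial_t\mathcal S+[\mathcal S,\mathcal A])f,f\rangle\le \frac{2-\varepsilon_0}{\Gamma(t)}\langle-\mathcal Sf,f\rangle = \frac{2-\varepsilon_0}{\Gamma}N\|f\|^2 .
$$
Equivalently, the quadratic form of the operator in \eqref{2.37a} must be nonpositive on $H^1_0$.

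I would evaluate the three pieces of \eqref{2.37a} separately.

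1. The Laplacian term gives $-\frac{(2-\varepsilon_0)a-4\gamma_0(a^2+b^2)}{\Gamma}\|\nabla f\|^2$. This is $\le0$ once $\gamma_0\le \frac{(2-\varepsilon_0)a}{4(a^2+b^2)}$, and in fact it is strictly negative with a margin if $\gamma_0$ is chosen a bit smaller.

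2. For the $|x|^2$ term, the bracket $1-4a\gamma_0+4(a^2+b^2)\gamma_0^2-(2-\varepsilon_0)(\frac12-a\gamma_0)$ equals $\frac{\varepsilon_0}{2}-(2+\varepsilon_0)a\gamma_0+4(a^2+b^2)\gamma_0^2$. For small $\gamma_0$ this is positive, of size about $\varepsilon_0/2$, so this term is unfavorable: it contributes $+\frac{\gamma_0 c}{\Gamma^3}\|xf\|^2$.

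3. The imaginary term $-i\frac{\varepsilon_0b\gamma_0}{\Gamma^2}\langle(2x\cdot\nabla+n)f,f\rangle$ is real, since $i(2x\cdot\nabla+n)$ is symmetric. By Cauchy--Schwarz it is bounded by $\frac{2\varepsilon_0|b|\gamma_0}{\Gamma^2}\|xf\|\|\nabla f\|$.

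The main obstacle is controlling the positive $\|xf\|^2/\Gamma^3$ term, which has no sign. The gradient alone cannot absorb it, since Hardy-type bounds go the wrong way. My first attempt would be to tune the parameters so that the $|x|^2$ coefficient itself becomes nonpositive. This means choosing $\gamma_0$ near the smaller root of $4(a^2+b^2)\gamma_0^2-(2+\varepsilon_0)a\gamma_0+\frac{\varepsilon_0}{2}=0$, which is real when $(2+\varepsilon_0)^2a^2\ge 8\varepsilon_0(a^2+b^2)$, i.e.\ $\varepsilon_0$ small relative to $a^2/(a^2+b^2)$. Then $\gamma_0\approx\frac{\varepsilon_0}{4a}$, which is compatible with step 1 and leaves a negative multiple of both $\|\nabla f\|^2/\Gamma$ and $\|xf\|^2/\Gamma^3$. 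The cross term from step 3 is then absorbed by AM--GM, $\frac{\gamma_0\varepsilon_0|b|}{\Gamma^2}\|xf\|\|\nabla f\|\le \frac{\theta}{\Gamma}\|\nabla f\|^2+\frac{\gamma_0^2\varepsilon_0^2b^2}{4\theta\Gamma^3}\|xf\|^2$. The extra $\|xf\|^2/\Gamma^3$ coefficient this produces is $O(\varepsilon_0^4b^2/a^2)$. That is smaller than the available negative coefficient, which I estimate to be of order $\gamma_0\varepsilon_0\sim\varepsilon_0^2/a$, provided $\varepsilon_0\lesssim a^2/(a^2+b^2)$ and $\gamma_0$ is taken strictly between the two roots.

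The remaining task is to fix explicit $\varepsilon_0$ and $\gamma_0$ depending only on $a$ and $b$; this explains the $a,b$-dependence of the constants $K_1,K_2$ in Theorem \ref{Thm1}. With that choice the quadratic form of \eqref{2.37a} is $\le0$, and \eqref{Y1} follows.
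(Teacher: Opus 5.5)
Your proposal is correct and follows essentially the paper's route. You get \eqref{Y2} by pairing \eqref{2.29a} with $f$ and using \eqref{2.31C}, and \eqref{Y1} from Lemma~\ref{lemma-2.5} once the quadratic form of \eqref{2.37a} is shown to be nonpositive, with Cauchy--Schwarz on the cross term, $\varepsilon_0$ small, and $\gamma_0$ chosen so the $|x|^2$ coefficient is negative while the gradient coefficient stays nonnegative.

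The paper fixes the split $\gamma_0|\mathrm{Im}\langle 2x\cdot\nabla f,f\rangle|\le\Gamma\|\nabla f\|^2+\gamma_0^2\Gamma^{-1}\||x|f\|^2$. This is your $\theta=\varepsilon_0|b|$, once you restore the factor $2$ you dropped in the AM--GM step. That split merges the cross term into the single quadratic $\mathrm{II}=\varepsilon_0/2-[2a+(a-|b|)\varepsilon_0]\gamma_0+4(a^2+b^2)\gamma_0^2$, and the paper then takes $\gamma_0=\frac{2a+(a-|b|)\varepsilon_0}{8(a^2+b^2)}$ at its vertex under a discriminant condition on $\varepsilon_0$. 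This supplies the explicit parameter choice that your order-of-magnitude argument leaves open.
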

\begin{proof}
By \eqref{2.31C}, we have  $\mathcal S$ and $\mathcal A$ are symmetric and skew-symmetric, respectively. Then, along with \eqref{2.29a}, implies \eqref{Y2}. We now turn to the proof of \eqref{Y1}. By the Cauchy-Schwartz inequality, it holds  that
\begin{equation}\label{2.40a}
\gamma_{0}\,\big|\text{Im}\langle 2x\cdot\nabla f,f\rangle\big|\leq \Gamma(t)\|\nabla f\|^2+\frac{\gamma_{0}^2}{\Gamma(t)}
\big\||x|f\big\|^2.
\end{equation}
Hence, by \eqref{2.40a} and \eqref{2.37a}, we have
\begin{equation}\label{2.41a}
\begin{split}
&\left\langle\left\{-\left(\partial_t\mathcal S+[\mathcal S,\mathcal A]\right)+(2-\varepsilon_{0})\frac{\mathcal S}{\Gamma(t)}\right\}f,f\right\rangle\\
\leq& -\frac{(2-\varepsilon_{0})a-4\gamma_{0}(a^2+b^2)-\varepsilon_{0} |b|}{\Gamma(t)}\|\nabla f\|^2\\
&+\frac{\gamma_{0}\left\{
\varepsilon_{0}/2-[2a+(a-|b|)\varepsilon_{0}]\gamma_{0}+4(a^2+b^2)\gamma_{0}^2\right\}}{\Gamma^3(t)}
\big\||x|f\big\|^2.
\end{split}
\end{equation}
Denote by
\begin{equation*}
\text{I}:= (2-\varepsilon_{0})a-4\gamma_{0}(a^2+b^2)-\varepsilon_{0} |b|,
\end{equation*}
\begin{equation*}
\text{II}:= \varepsilon_{0}/2-[2a+(a-|b|)\varepsilon_{0}]\gamma_{0}+4(a^2+b^2)\gamma_{0}^2.
\end{equation*}
Now, we first fix the above parameter $\varepsilon_{0}\in(0,1)$ to be such that
\begin{equation*}
[2a+(a-|b|)\varepsilon_{0}]^2-8\varepsilon_{0}(a^2+b^2)\geq0
\end{equation*}
and
\begin{equation*}
0<\frac{2a+(a-|b|)\varepsilon_{0}}{8(a^2+b^2)}\leq \frac{2a-(a+|b|)\varepsilon_{0}}{4(a^2+b^2)};
\end{equation*}
and then we choose
\begin{equation*}
\gamma_{0}=\frac{2a+(a-|b|)\varepsilon_{0}}{8(a^2+b^2)}.
\end{equation*}
Based on the above definitions, one can easily check that $\text{I}\geq0$ and $\text{II}\leq 0$. Thus, we have
\begin{equation}\label{gutou1}
\left\langle\left\{-\left(\partial_t\mathcal S+[\mathcal S,\mathcal A]\right)+(2-\varepsilon_{0})\frac{\mathcal S}{\Gamma(t)}\right\}f,f\right\rangle\leq 0.
\end{equation}
By Lemma~\ref{lemma-2.5} , \eqref{2.29a} and \eqref{gutou1} we obtain
\begin{equation*}
\frac{d}{dt}N(t)\leq\frac{2-\varepsilon_{0}}{\Gamma(t)}N(t)+\frac{\|-Vf+e^{\frac{y}{2}}g\|^{2}}{2\|f\|^{2}}.
\end{equation*}

In summary, we finish the proof of this lemma.
\end{proof}

\begin{remark}
 We note the limiting behavior for the choices of \( \varepsilon_0 \) and \( \gamma_0 \), namely, when \( a \approx 0 \) and \( b = 1 \), one may take \( \varepsilon_0 = a^2 / 8 \) and \( \gamma_0 \approx a / 4 \), where \( \approx \) denotes ``approximately equal to''.
\end{remark}

\section{Proof\ of\ Theorem~\ref{Thm1}}\label{pr3}

%Prior to proving Theorem~\ref{Thm1}, it is necessary to establish several energy estimates for equation \eqref{1.1}, as presented in Lemmas~\ref{lemma-1.1}, ~\ref{lemma-1.2}, and Proposition~\ref{lemma-1.3}. While the proofs of these lemmas follow arguments analogous to those in Reference \cite{Duan-Wang-Zhang}, we provide full proofs in the Appendix.\\
\noindent\textbf{Proof\ of\ Theorem~\ref{Thm1}}.
For each $r'>0$, we denote $B_{r'}:= B_{r'}(x_{0})$.  Let $\chi\in C_{0}^{\infty}(B_{R_{0}})$ be such that
$$
0\leq\chi(\cdot)\leq 1 \ \mathrm{in} \ B_{R_{0}} \ \mathrm{and} \ \chi(\cdot)=1 \ \mathrm{in} \ B_{(1+3\delta/2)R}.$$
Note that $g$ is supported on $\{x: (1+3\delta/2)R\leq|x|\leq R_{0}\}.$
Recall that $\chi(\cdot)=1$ in $B_{(1+\delta)R}$. We can easily check that
\begin{eqnarray}\label{3.333}
\displaystyle{}\displaystyle{}\frac{\int_{B_{R_{0}}}e^{\frac{y}{2}}[g\overline{f}+f\overline{g}]\mathrm{d}x}
{\int_{B_{R_{0}}}|f|^{2}\mathrm{d}x}
\displaystyle{}&=&\displaystyle{}\frac{-2a\int_{B_{R_{0}}\setminus B_{(1+3\delta/2)R}}
e^{y}\left[\Delta\chi\chi \varphi\overline{\varphi}+2\chi\nabla\chi(\nabla \varphi_{1}\varphi_{1}+\nabla \varphi_{2}\varphi_{2})\right]\mathrm{d}x}
{\int_{B_{R_{0}}}|\chi\varphi(x,t)|^{2}e^{y}\mathrm{d}x}\nonumber\\
\displaystyle{}&&+\displaystyle{}\frac{4b\int_{B_{R_{0}}\setminus B_{(1+3\delta/2)R}}
e^{y}\chi\nabla\chi(\nabla \varphi_{2}\varphi_{1}+\nabla \varphi_{1}\varphi_{2})\mathrm{d}x}
{\int_{B_{R_{0}}}|\chi\varphi(x,t)|^{2}e^{y}\mathrm{d}x}\\
\displaystyle{}&\leq&\displaystyle{}  e^{-\frac{\mathcal{K}_{1}}{T-t+\lambda}}\frac{2a\int_{B_{R_{0}}\setminus B_{(1+3\delta/2)R}}\left(2|\nabla\chi\nabla \varphi_{1}\varphi_{1}+\nabla\chi\nabla \varphi_{2}\varphi_{2}|
+|\Delta\chi||\varphi|^{2}\right)\mathrm{d}x}{\int_{B_{(1+\delta)R}}|\varphi|^{2}\mathrm{d}x}\nonumber\\
\displaystyle{}&&+\displaystyle{} e^{-\frac{\mathcal{K}_{1}}{T-t+\lambda}}\frac{4|b|\int_{B_{R_{0}}\setminus B_{(1+3\delta/2)R}}|\nabla\chi\nabla \varphi_{2}\varphi_{1}+\nabla\chi\nabla \varphi_{1}\varphi_{2}|\mathrm{d}x}{\int_{B_{(1+\delta)R}}|\varphi|^{2}\mathrm{d}x}\nonumber
\end{eqnarray}
where $\mathcal{K}_{1}:=[(1+3\delta/2)^2-(1+\delta)^2]\gamma_{0}R^2/4$. It follows from  \eqref{3.333} that
\begin{equation}\label{3.444}
\begin{array}{lll}
\displaystyle{}\frac{\int_{B_{R_{0}}}e^{\frac{y}{2}}[g\overline{f}+f\overline{g}]\mathrm{d}x}
{\int_{B_{R_{0}}}|f|^{2}\mathrm{d}x}
\displaystyle{}&\leq& \displaystyle{} e^{-\frac{\mathcal{K}_{1}}{T-t+\lambda}}\frac{4(a+|b|)\|\nabla\chi\|_{\infty}
\left(\int_{B_{R_{0}}}|\varphi|^{2}\mathrm{d}x\right)^{\frac{1}{2}}\left(\int_{B_{R_{0}}}
|\nabla\varphi|^{2}\mathrm{d}x\right)^{\frac{1}{2}}}
{\int_{B_{(1+\delta)R}}|\varphi|^{2}\mathrm{d}x}\\
\\
\displaystyle{}&&+ \displaystyle{} e^{-\frac{\mathcal{K}_{1}}{T-t+\lambda}}\frac{2a\|\Delta\chi\|_{\infty}\int_{B_{R_{0}}}|\varphi|^{2}\mathrm{d}x}
{\int_{B_{(1+\delta)R}}|\varphi|^{2}\mathrm{d}x}.
\end{array}
\end{equation}
Here $\|\nabla\chi\|_{\infty}:=\|\nabla\chi\|_{L^{\infty}(B_{R_{0}})}$ and $\|\Delta\chi\|_{\infty}:=\|\Delta\chi\|_{L^{\infty}(B_{R_{0}})}.$

To achieve the goal, we divide the proof process into four steps.

\textbf{Step 1.} We claim that
\begin{equation}\label{3.1yy}
\left\{\begin{array}{lll}
\displaystyle{}\frac 12\frac{d}{dt}\|f(t)\|^2+N(t)\|f(t)\|^2
\leq\displaystyle{}\left[e^{-\frac{\mathcal{K}_{1}}{T-t+\lambda}}e^{1+\frac{C_{5}}{(a+b^{2}/a)h_{0}}}
\left(1+\frac{|b|}{a}\right)
\left(a^2+\frac{b^2}{a}+T^{-2}\right)+\|V\|_{\infty}\right]\|f(t)\|^2,\\
\\
\displaystyle{}\frac{d}{dt}N(t)\leq\frac{2-\varepsilon_{0}}{\Gamma(t)}N(t)+(a^2+b^2)\left(a+\frac{|b|}{a}+\frac{b^4}{a^3}+T^{-2}\right)
e^{1+\frac{C_{5}}{(a+b^{2}/a)h_{0}}}e^{-\frac{\mathcal{K}_{1}}{T-t+\lambda}} +\|V\|^{2}_{\infty}.
\end{array}\right.
\end{equation}

On one hand, by Lemma~\ref{lemma-1.1} (where $r, R,\tau_{1} $ and $\tau_{2}$ are replaced by
$R_{0}, 2R_{0}, T/4$ and $T/2$, respectively), we have that
\begin{equation}\label{3.555}
\int_{B_{R_{0}}}|\varphi(x,t)|^{2}\mathrm{d}x
\leq C\left(a+\frac{|b|}{a}+T^{-1}+\|V\|_{\infty}\right)\int_{T/2}^{T}\int_{B_{2R_{0}}}|\varphi|^{2}
\mathrm{d}x\mathrm{d}s \  \ \mathrm{for \ each} \ t\in [3T/4,T],
 \end{equation}
where $C:=C(R)>0.$
By Lemma~\ref{lemma-1.2} (where $ R $ and $\tau$ are replaced by  $R_{0}$ and $T/4$, respectively),
we get that
\begin{equation}\label{3.666}
\int_{B_{R_{0}}}|\nabla\varphi(x,t)|^{2}\mathrm{d}x\leq C\left(a+\frac{b^{4}}{a^{3}}+a^{-1}T^{-2}+a^{-1}\|V\|^{2}_{\infty}\right)\int_{T/2}^{T}\int_{B_{2R_{0}}}|\varphi|^{2}\mathrm{d}x\mathrm{d}s
\  \ \mathrm{for \ each} \ t\in [3T/4,T],
\end{equation}
 where $C:=C(R)>0.$
 It follows from \eqref{3.444}-\eqref{3.666} that for each $ t\in [3T/4,T]$,
\begin{equation}\label{3.777}
\displaystyle{}\frac{\int_{B_{R_{0}}}e^{\frac{y}{2}}[g\overline{f}+f\overline{g}]\mathrm{d}x}
{\int_{B_{R_{0}}}|f|^{2}\mathrm{d}x}
\leq\displaystyle{} e^{-\frac{\mathcal{K}_{1}}{T-t+\lambda}}
\frac{(1+|b|/a)(a^{2}+b^{2}/a+T^{-2}+a^{1/2}\|V\|^{3/2}_{\infty})
\int_{T/2}^{T}\int_{B_{2R_{0}}}|\varphi|^{2}\mathrm{d}x\mathrm{d}s}
{\int_{B_{(1+\delta)R}}|\varphi(x,t)|^{2}\mathrm{d}x}.
\end{equation}
 According to \eqref{1.10} (where $r, R,\tau_{1} $ and $\tau_{2}$ are replaced by
 $R, 2R_{0}, T/4$ and $T/2$, respectively), it holds that
 \begin{equation}\label{3.888}
\begin{array}{lll}
e^{2T\|V\|_{\infty}}\int_{T/2}^{T}\int_{B_{2R_{0}}}|\varphi(x,t)|^{2}\mathrm{d}x\mathrm{d}s
&\leq& e^{1+\frac{C_{5}}{(a+b^{2}/a)h_{0}}}
\int_{B_{(1+\delta)R}}|\varphi(x,t)|^{2}\mathrm{d}x
\end{array}
\end{equation}
for each $t\in [T-h_{0},T]$. Here, we used the fact that $h_{0}<T/4$ (see \eqref{1.9}). This, along with \eqref{3.777}, implies that
\begin{equation}\label{3.999}
\begin{array}{lll}
\displaystyle{}\frac{\int_{B_{R_{0}}}e^{\frac{y}{2}}[g\overline{f}+f\overline{g}]\mathrm{d}x}
{\int_{B_{R_{0}}}|f|^{2}\mathrm{d}x}\leq \displaystyle{} e^{-\frac{\mathcal{K}_{1}}{T-t+\lambda}}e^{1+\frac{C_{5}}{(a+b^{2}/a)h_{0}}}
\left(1+\frac{|b|}{a}\right)
\left(a^2+\frac{b^2}{a}+T^{-2}\right)
 \ \ \  \mathrm{for \ each}  \ \ t\in [T-h_{0},T].
\end{array}
\end{equation}
On the other hand, by similar arguments as those for \eqref{3.444}, we have that
 \begin{eqnarray*}
\displaystyle{}\frac{\int_{B_{R_{0}}} e^{y}|g|^{2}\mathrm{d}x}{\int_{B_{R_{0}}}|f|^{2}\mathrm{d}x}
&\leq& e^{-\frac{\mathcal{K}_{1}}{T-t+\lambda}}(a^2+b^2)\displaystyle{}\frac{\int_{B_{R_{0}}} |-2\nabla\chi\cdot\nabla\varphi-\varphi\Delta\chi|^{2}\mathrm{d}x}
{\int_{B_{(1+\delta)R}}|\varphi|^{2}\mathrm{d}x}\\
&\leq&e^{-\frac{\mathcal{K}_{1}}{T-t+\lambda}}(a^2+b^2)\frac{8\|\nabla\chi\|^{2}_{\infty}\int_{B_{R_{0}}} |\nabla\varphi|^{2}\mathrm{d}x+2\|\Delta\chi\|^{2}_{\infty}
\int_{B_{R_{0}}}|\varphi|^{2}\mathrm{d}x}{\int_{B_{(1+\delta)R}}|\varphi|^{2}\mathrm{d}x}
\end{eqnarray*}
 for each $t\in [3T/4,T].$ This, together with  \eqref{3.555} and  \eqref{3.666}, yields that
\begin{equation}\label{3.101010}
\begin{array}{lll}
\displaystyle{}\frac{\int_{B_{R_{0}}} e^{y}|g|^{2}\mathrm{d}x}{\int_{B_{R_{0}}}|f|^{2}\mathrm{d}x}
\leq\displaystyle{}(a^2+b^2)\left(a+\frac{|b|}{a}+\frac{b^4}{a^3}+T^{-2}+\|V\|^{2}_{\infty}\right)
\displaystyle{}e^{-\frac{\mathcal{K}_{1}}{T-t+\lambda}}\frac{\int_{T/2}^{T}\int_{B_{2R_{0}}}|\varphi|^{2}\mathrm{d}x\mathrm{d}s}
{\int_{B_{(1+\delta)R}}|\varphi(x,t)|^{2}\mathrm{d}x}
\end{array}
\end{equation}
for each  $t\in [3T/4,T].$
It follows from \eqref{3.888} and \eqref{3.101010} that
\begin{equation}\label{3.111111}
\begin{array}{lll}
\displaystyle{}\frac{\int_{B_{R_{0}}} e^{y}|g|^{2}\mathrm{d}x}{\int_{B_{R_{0}}}|f|^{2}\mathrm{d}x}
&\leq&\displaystyle{}(a^2+b^2)\left(a+\frac{|b|}{a}+\frac{b^4}{a^3}+T^{-2}+\|V\|^{2}_{\infty}\right)
e^{1+\frac{C_{5}}{(a+b^{2}/a)h_{0}}}e^{-2T\|V\|_{\infty}}e^{-\frac{\mathcal{K}_{1}}{T-t+\lambda}}\\
&\leq&\displaystyle{}(a^2+b^2)\left(a+\frac{|b|}{a}+\frac{b^4}{a^3}+T^{-2}\right)
e^{1+\frac{C_{5}}{(a+b^{2}/a)h_{0}}}e^{-\frac{\mathcal{K}_{1}}{T-t+\lambda}} 
\end{array}
\end{equation}
for  each $t\in [T-h_{0},T].$ Thus, by \eqref{Y2}, \eqref{3.999}, \eqref{Y1} and \eqref{3.111111}, implies that \eqref{3.1yy}.

\textbf{Step 2.} In this step, the aim is to give an upper bound for the term $D_{\ell}$ (i.e.,\eqref{dl} below).

By Lemma \ref{lemma4} and step 1 with $y(t)=\|f(t)\|^{2}$, $C_{0}=1-\varepsilon_{0}$ and $\hbar=\lambda$. Let $\ell>1$ be such that $\ell\lambda<\min (1/2, T /4)$,
			$$
			F_1(t)=e^{-\frac{\mathcal{K}_{1}}{T-t+\lambda}}e^{1+\frac{C_{5}}{(a+b^{2}/a)h_{0}}}
\left(1+\frac{|b|}{a}\right)
\left(a^2+\frac{b^2}{a}+T^{-2}\right)+\|V\|_{\infty}
			$$
			and
$$F_2(t)=(a^2+b^2)\left(a+\frac{|b|}{a}+\frac{b^4}{a^3}+T^{-2}\right)
e^{1+\frac{C_{5}}{(a+b^{2}/a)h_{0}}}e^{-\frac{\mathcal{K}_{1}}{T-t+\lambda}} +\|V\|^{2}_{\infty},$$
knowing that for  each $t\in [T-h_{0},T]$
\begin{equation*}
\left\{\begin{array}{lll}
\left|\frac 12\frac{d}{dt}\|f(t)\|^2+N(t)\|f(t)\|^2\right| \leq \left(\frac{1}{2}N(t)+\frac{1-\varepsilon_{0}}{T-t+\lambda}\right)\|f(t)\|^2+F_1(t)\|f(t)\|^2,\\
\\
N^{\prime}(t) \leq\frac{1+(1-\varepsilon_0)}{T-t+\lambda} N(t)+F_2(t),
\end{array}\right.
\end{equation*}
one can deduce the following interpolation inequality with
			$$
			\|f(x,T-\ell\lambda)\|^{2(1+M_{\ell})} \leq \|f(x,T)\|^{2} \|f(x,T-2\ell\lambda)\|^{2M_{\ell}}(2 \ell+1)^{3 (1-\varepsilon_{0})\left(1+M_{\ell}\right)}\mathrm{e}^{D_{\ell}}
			$$
			that is
\begin{equation*}\label{}
\begin{array}{lll}
\displaystyle{}\left(\int_{B_{R_0}}|u(x,T-\ell\lambda)|^2 e^{-\frac{\gamma_{0}\left|x-x_0\right|^2}{\lambda (\ell+1)}} \mathrm{d} x\right)^{1+M_{\ell}}
&\leq&\displaystyle{}(2 \ell+1)^{3 (1-\varepsilon_{0})\left(1+M_{\ell}\right)} \int_{B_{R_0}}|u(x,T)|^2 e^{-\frac{\gamma_{0}\left|x-x_0\right|^2}{\lambda}} \mathrm{d} x \\
&&\times\displaystyle{}\left(\int_{B_{R_0}}|u(x,T-2 \ell\lambda)|^2 e^{-\frac{\gamma_{0}\left|x-x_0\right|^2}{\lambda(2 \ell+1)}}\mathrm{d}x\right)^{M_{\ell}} e^{D_{\ell}}
\end{array}
\end{equation*}
where
\begin{equation}\label{dl}
D_{\ell}=3\left(1+M_{\ell}\right)\left[2\ell \lambda\int_{T-2 \ell \lambda}^T\left|F_2\right|\mathrm{d} t+\int_{T-2 \ell \lambda}^T\left|F_1\right|\mathrm{d} t\right] \ \mathrm{and}\
M_{\ell} \leq \frac{3(\ell+1)^{1-\varepsilon_{0}}}{1-\left(2/3\right)^{1-\varepsilon_{0}}}.
\end{equation}
Since $$e^{-\frac{\mathcal{K}_{1}}{T-t+\lambda}}=e^{-\frac{\gamma_{0}(\delta+5\delta^2/4)R^2}{ (T-t+\lambda)}} \leq e^{-\frac{\gamma_{0}(\delta+5\delta^2/4)R^2}{2l\lambda}} \ \mathrm{for}\ t \in[T-2 \ell \lambda, T]\  \mathrm{with}\ \ell>1,$$ by the definition of $F_1$ and $F_2$ , we have that
\begin{equation}\label{F1}
\left|F_1(t)\right|\leq e^{-\frac{\gamma_{0}(\delta+5\delta^2/4)R^2}{2l\lambda}}e^{1+\frac{C_{5}}{(a+b^{2}/a)h_{0}}}
\left(1+\frac{|b|}{a}\right)
\left(a^2+\frac{b^2}{a}+T^{-2}\right)+\|V\|_{\infty} \ \mathrm{for}\ t \in[T-2 \ell \lambda, T].
\end{equation}
and
\begin{equation}\label{F2} \left|F_2(t)\right|\leq e^{-\frac{\gamma_{0}(\delta+5\delta^2/4)R^2}{2l\lambda}}(a^2+b^2)\left(a+\frac{|b|}{a}+\frac{b^4}{a^3}+T^{-2}\right)
e^{1+\frac{C_{5}}{(a+b^{2}/a)h_{0}}}+\|V\|^{2}_{\infty} \ \mathrm{for}\ t \in[T-2 \ell \lambda, T].
\end{equation}
Based on \eqref{3.999} and \eqref{3.111111}, we need to assume that $2 \ell \lambda\leq h_{0}$ is true.
By \eqref{F1}, \eqref{F2} and \eqref{1.9}, we conclude that for any  $2\ell \lambda\leq h_0\gamma_{0}(\delta+5\delta^2/4)R^{2}(a+b^{2}/a)/C_5\leq h_0$\\
			$$
			\begin{aligned}
				D_{\ell}&=3\left(1+M_{\ell}\right)\left[2\ell \lambda\int_{T-2 \ell \lambda}^T\left|F_2\right|\mathrm{d} t+\int_{T-2 \ell \lambda}^T\left|F_1\right|\mathrm{d} t\right]\\
				&\leq 3(2l\lambda)^{2}\left(1+M_{\ell}\right)\left[(a^2+b^2)\left(a+\frac{|b|}{a}+\frac{b^4}{a^3}+T^{-2}\right)+\|V\|^{2}_{\infty}\right]\\
		&+6l\lambda\left(1+M_{\ell}\right)\left[\left(1+\frac{|b|}{a}\right)
\left(a^2+\frac{b^2}{a}+T^{-2}\right)+\|V\|_{\infty}\right]\\	
&\leq \mathcal{K}_{2}\left(1+M_{\ell}\right)\left(a^3+b^2+\frac{b^3}{a^2}+\left(\frac{b^2}{a}\right)^3+\left(1+\frac{|b|}{a}\right)T^{-1}+\|V\|^{2/3}_{{\infty}}\right)
			\end{aligned}
			$$
where $\mathcal{K}_{2}:=\mathcal{K}_{2}(R, \delta)>0.$\\

\textbf{Step 3.} We claim that there is $K>1$ such that for any $T>0$ and any $\lambda\leq h_0\gamma_{0}(\delta+5\delta^2/4)R^2(a+b^{2}/a)/(2\ell C_5)$,
\begin{equation}\label{Fa3.15}
\begin{array}{lll}
\displaystyle{}&&\displaystyle{}\left(\int_{B_{(1+3\delta/2)R}}|\varphi(x,T-\ell\lambda)|^2 \mathrm{d} x\right)^{1+K}\\
\\
&\leq& \displaystyle{}e^{\mathcal{K}_{3}\left(1+K\right)\left(a^3+b^2+\frac{b^3}{a^2}+\left(\frac{b^2}{a}\right)^3
+\left(1+\frac{|b|}{a}\right)T^{-1}+\|V\|^{2/3}_{{\infty}}\right)}\left({\int_{T/2}^{T}\int_{B_{2R_{0}}}}|\varphi|^{2}\mathrm dx\mathrm ds\right)^{K}\\
\\
&&\displaystyle{}
\times
\left(\mathrm{e}^{\frac{\gamma_{0}r^2}{2\lambda}}\int_{B_r}|\varphi(x,T)|^2 \mathrm{d} x+\mathrm{e}^{-\frac{\gamma_{0}r^2}{2\lambda}}{\int_{T/2}^{T}\int_{B_{2R_{0}}}}|\varphi|^{2}\mathrm dx\mathrm ds\right).
\end{array}
\end{equation}
By step 2, we can deduce that for any  $2\ell \lambda\leq h_0\gamma_{0}(\delta+5\delta^2/4)R^2(a+b^{2}/a)/C_5$

\begin{equation}\label{3.16a}
\begin{array}{lll}
&&\displaystyle{}\left(\int_{B_{R_0}}|u(x,T-\ell\lambda)|^2 \mathrm{e}^{-\frac{\gamma_{0}\left|x\right|^2}{\lambda (\ell+1)}} \mathrm{d} x\right)^{1+M_{\ell}}\\
&\leq& \displaystyle{}e^{\mathcal{K}_{2}\left(1+M_{\ell}\right)\left(a^3+b^2+\frac{b^3}{a^2}+\left(\frac{b^2}{a}\right)^3
+\left(1+\frac{|b|}{a}\right)T^{-1}+\|V\|^{2/3}_{{\infty}}\right)}(2 \ell+1)^{3 (1-\varepsilon_{0})\left(1+M_{\ell}\right)} \int_{B_{R_0}}|u(x, T)|^2 \mathrm{e}^{-\frac{\gamma_{0}\left|x\right|^2}{\lambda}} \mathrm{d} x \\
&&\times\displaystyle{}\left(\int_{B_{R_0}}|u(x,T-2 \ell\lambda)|^2 \mathrm{e}^{-\frac{\gamma_{0}\left|x\right|^2}{\lambda(2 \ell+1)}}\mathrm{d}x\right)^{M_{\ell}}.
\end{array}
\end{equation}
By Lemma \ref{lemma-1.1}, we have that,
\begin{equation*}
\begin{array}{lll}
\displaystyle{}\int_{B_{R_{0}}}|\varphi(x,T-2l\lambda)|^{2}\mathrm dx
\leq C \left[\left(2a+\frac{|b|}{a}\right)R_{0}^{-2}+(T/2-2l\lambda)^{-1}+\|V\|_{\infty}\right]
\displaystyle{\int_{T/2}^{T}\int_{B_{2R_{0}}}}|\varphi|^{2}\mathrm dx\mathrm ds.
\end{array}
\end{equation*}
This yields that
\begin{equation*}
\begin{array}{lll}
\displaystyle{}\int_{B_{R_0}}|u(x,T-2 \ell\lambda)|^2 \mathrm{e}^{-\frac{\gamma_{0}\left|x\right|^2}{\lambda(2 \ell+1)}}\mathrm{d}x \leq \displaystyle{}C \left[\left(2a+\frac{|b|}{a}\right)R_{0}^{-2}+(2l\lambda)^{-1}+\|V\|_{\infty}\right] {\int_{T/2}^{T}\int_{B_{2R_{0}}}}|\varphi|^{2}\mathrm dx\mathrm ds.
\end{array}
\end{equation*}
This, along with \eqref{3.16a}, implies that
$$
\begin{aligned}
&\left(\int_{B_{R_0}}|u(x,T-\ell\lambda)|^2 \mathrm{d} x\right)^{1+M_{\ell}}\\
\leq& \mathrm{e}^{\frac{\gamma_{0}R_0^2}{\lambda (\ell+1)}{(1+M_{\ell})}}e^{\mathcal{K}_{2}\left(1+M_{\ell}\right)\left(a^3+b^2+\frac{b^3}{a^2}+\left(\frac{b^2}{a}\right)^3
+\left(1+\frac{|b|}{a}\right)T^{-1}+\|V\|^{2/3}_{{\infty}}\right)}(2 \ell+1)^{3 (1-\varepsilon_{0})\left(1+M_{\ell}\right)} \\
&\times \left[\left(2a+\frac{|b|}{a}\right)R_{0}^{-2}+(2l\lambda)^{-1}+\|V\|_{\infty}\right]^{M_{\ell}}\left({\int_{T/2}^{T}\int_{B_{2R_{0}}}}|\varphi|^{2}\mathrm dx\mathrm ds\right)^{M_{\ell}}
\int_{B_{R_0}}|u(x,T)|^2 \mathrm{e}^{-\frac{\gamma_{0}\left|x\right|^2}{\lambda}} \mathrm{d}x.
\end{aligned}
$$
We split $\int_{B_{R_0}}|u(x,T)|^2 \mathrm{e}^{-\frac{\gamma_{0}\left|x\right|^2}{\lambda}} \mathrm{d}x$ into two parts. For any $0<r<R$ such that
			$$
\begin{aligned}
&\int_{B_{R_0}}|u(x,T)|^2 \mathrm{e}^{-\frac{\gamma_{0}\left|x\right|^2}{\lambda}} \mathrm{d}x\\
\leq& \int_{B_r}|\varphi(x,T)|^2 \mathrm{d} x+C\mathrm{e}^{-\frac{\gamma_{0}r^2}{\lambda}}\left[\left(2a+\frac{|b|}{a}\right)R_{0}^{-2}+T^{-1}+\|V\|_{\infty}\right]{\int_{T/2}^{T}\int_{B_{2R_{0}}}}|\varphi|^{2}\mathrm dx\mathrm ds
\end{aligned}
			$$
where $C>0.$ Consequently we have
\begin{equation}\label{a3.17}
\begin{array}{lll}
&&\displaystyle{}\left(\int_{B_{R_0}}|u(x,T-\ell\lambda)|^2 \mathrm{d} x\right)^{1+M_{\ell}}\\
\\
&\leq& \displaystyle{}e^{\mathcal{K}_{2}\left(1+M_{\ell}\right)\left(a^3+b^2+\frac{b^3}{a^2}+\left(\frac{b^2}{a}\right)^3
+\left(1+\frac{|b|}{a}\right)T^{-1}+\|V\|^{2/3}_{{\infty}}\right)}(2 \ell+1)^{3 (1-\varepsilon_{0})\left(1+M_{\ell}\right)}\\
\\
&&\displaystyle{}
\times \left[\left(2a+\frac{|b|}{a}\right)R_{0}^{-2}+(2l\lambda)^{-1}+\|V\|_{\infty}\right]^{M_{\ell}}\left({\int_{T/2}^{T}\int_{B_{2R_{0}}}}|\varphi|^{2}\mathrm dx\mathrm ds\right)^{M_{\ell}}\\
\\
&&\displaystyle{}
\times
\left(\mathrm{e}^{\frac{\gamma_{0}R_0^2}{\lambda (\ell+1)}{(1+M_{\ell})}}\int_{B_r}|\varphi(x,T)|^2 \mathrm{d} x+\mathrm{e}^{-\frac{\gamma_{0}r^2}{\lambda}}\mathrm{e}^{\frac{\gamma_{0}R_0^2}{\lambda (\ell+1)}{(1+M_{\ell})}}{\int_{T/2}^{T}\int_{B_{2R_{0}}}}|\varphi|^{2}\mathrm dx\mathrm ds\right).
\end{array}
\end{equation}
Now, choose $\ell>1$ such that $R_0^2\left(1+M_{\ell}\right)/(\ell+1) \leq r^2/2$ (knowing that $M_{\ell} \leq  \frac{3(\ell+1)^{1-\varepsilon_{0}}}{1-\left(2/3\right)^{1-\varepsilon_{0}}}$ for $\ell>1$ and $0<\varepsilon_{0}<1)$. Therefore, by \eqref{a3.17}, we have that \eqref{Fa3.15}.

\textbf{Step 4.} End of the proof.

By Proposition \ref{lemma-1.3} and Lemma \ref{lemma-1.1}, since $\ell \lambda \leq h_0$,
\begin{equation*}
\begin{array}{lll}
\displaystyle{}\int_{B_{R}}|\varphi(x,T)|^2 \mathrm{d} x&\leq& C\left[a+\frac{|b|}{a}+T^{-1}+\|V\|_{\infty}\right]
\displaystyle{}\int_{T/2}^{T}\int_{B_{(1+2\delta)R}}|\varphi|^{2}\mathrm dx\mathrm ds\\
\\
\displaystyle{}&\leq& C\left[a+\frac{|b|}{a}+T^{-1}+\|V\|_{\infty}\right]e^{-2T\|V\|_{\infty}}\mathrm{e}^{1+\frac{C_5}{(a+b^{2}/a)h_0}}\displaystyle{} \int_{B_{(1+3\delta/2)R}}|\varphi(x,T-\ell\lambda)|^2 \mathrm{d} x\\
\\
\displaystyle{}&\leq& C\left(a+\frac{|b|}{a}+T^{-1}\right)\mathrm{e}^{1+\frac{C_5}{(a+b^{2}/a)h_0}} \displaystyle{}\int_{ B_{(1+3\delta/2)R}}|\varphi(x,T-\ell\lambda)|^2 \mathrm{d}x
\end{array}
\end{equation*}
where $C:=C(R,\delta)>0.$
As a consequence, with the conclusion of step 3, for any $\lambda\leq h_0\gamma_{0}(\delta+5\delta^2/4)R^2(a+b^{2}/a)/(2\ell C_5)$, one obtains
\begin{equation*}
\begin{array}{lll}
\displaystyle{}&&\displaystyle{}\left(\int_{B_{R}}|\varphi(x,T)|^2 \mathrm{d}x\right)^{1+K}\\
&\leq& \displaystyle{}Ce^{\mathcal{K}_{3}\left(1+K\right)\left(a^3+b^2+\frac{b^3}{a^2}+\left(\frac{b^2}{a}\right)^3
+\left(1+\frac{|b|}{a}\right)T^{-1}+\|V\|^{2/3}_{{\infty}}\right)}e^{\frac{C(1+K)}{(a+b^{2}/a)h_{0}}}\\
\\
&&\displaystyle{}
\times \left({\int_{T/2}^{T}\int_{B_{2R_{0}}}}|\varphi|^{2}\mathrm dx\mathrm ds\right)^{K}
\left(\mathrm{e}^{\frac{\gamma_{0}r^2}{2\lambda}}\int_{B_r}|\varphi(x,T)|^2 \mathrm{d} x+\mathrm{e}^{-\frac{\gamma_{0}r^2}{2\lambda}}{\int_{T/2}^{T}\int_{B_{2R_{0}}}}|\varphi|^{2}\mathrm dx\mathrm ds\right).
\end{array}
\end{equation*}
where $C=C(R,\delta)>0$.
On the other hand, for any $$\lambda \in\left(h_0\gamma_{0}[\delta+5\delta^2/4]R^2(a+b^{2}/a)/[2\ell C_5], \ell^{-1} \min (1/2, T/4)\right),$$ one has $1 \leq \mathrm{e}^{-\frac{\gamma_{0}r^2}{2\lambda}} \mathrm{e}^{\frac{\gamma_{0}r^2 \ell}{c_5h_0}}\ \mathrm{where} \ c_{5}:=[\gamma_{0}(\delta+5\delta^{2}/4)R^{2}(a+b^{2}/a)]/C_{5}\in(0,1).$ And for any $\lambda \geq \ell^{-1} \min (1/2, T/4)$, there holds $$1 \leq \mathrm{e}^{\frac{-\gamma_{0}r^2}{2\lambda}}\left(\mathrm{e}^{\gamma_{0}r^2 \ell}+\mathrm{e}^{\frac{2\gamma_{0}r^2 \ell}{T}}\right).$$ Finally, there is $K>1$ such that for any $T>0$ and any $\lambda>0$,
\begin{equation*}
\begin{array}{lll}
\displaystyle{}\left(\int_{B_{R}}|\varphi(x,T)|^2 \mathrm{d}x\right)^{1+K}
&\leq& \displaystyle{}Ce^{C\left(a^3+b^2+\frac{b^3}{a^2}+\left(\frac{b^2}{a}\right)^3
+\left(1+\frac{|b|}{a}\right)T^{-1}+\|V\|^{2/3}_{{\infty}}\right)}\left({\int_{T/2}^{T}\int_{B_{2R_{0}}}}|\varphi|^{2}\mathrm dx\mathrm ds\right)^{K}\\
\\
&&\displaystyle{}
\times e^{\frac{C(1+K)}{(a+b^{2}/a)h_{0}}}
\left(\mathrm{e}^{\frac{\gamma_{0}r^2}{2\lambda}}\int_{B_r}|\varphi(x,T)|^2 \mathrm{d} x+\mathrm{e}^{-\frac{\gamma_{0}r^2}{2\lambda}}{\int_{T/2}^{T}\int_{B_{2R_{0}}}}|\varphi|^{2}\mathrm dx\mathrm ds\right).
\end{array}
\end{equation*}
Next, choose $\lambda>0$ such that
\begin{equation*}
\begin{array}{lll}
&&\displaystyle{}Ce^{C\left(a^3+b^2+\frac{b^3}{a^2}+\left(\frac{b^2}{a}\right)^3
+\left(1+\frac{|b|}{a}\right)T^{-1}+\|V\|^{2/3}_{{\infty}}\right)}e^{\frac{C(1+K)}{(a+b^{2}/a)h_{0}}}
\left({\int_{T/2}^{T}\int_{B_{2R_{0}}}}|\varphi|^{2}\mathrm dx\mathrm ds\right)^{1+K}\mathrm{e}^{-\frac{\gamma_{0}r^2}{2\lambda}}\\
\\
&=&\displaystyle{}\frac{1}{2}\left(\int_{B_{R}}|\varphi(x,T)|^2 \mathrm{d}x\right)^{1+K}.
\end{array}
\end{equation*}
Thus, we have that
\begin{equation*}
\begin{array}{lll}
\displaystyle{}\int_{B_{R}}|\varphi(x,T)|^2 \mathrm{d}x
&\leq&2Ce^{C\left(a^3+b^2+\frac{b^3}{a^2}+\left(\frac{b^2}{a}\right)^3
+\left(1+\frac{|b|}{a}\right)T^{-1}+\|V\|^{2/3}_{{\infty}}\right)}
e^{\frac{C(1+K)}{(a+b^{2}/a)h_{0}}}\\
\\
&&\times\displaystyle{}\left({\int_{T/2}^{T}\int_{B_{2R_{0}}}}|\varphi|^{2}\mathrm dx\mathrm ds\right)^{\frac{1+2K}{2+2K}}\left(\int_{B_r}|\varphi(x,T)|^2 \mathrm{~d} x\right)^{\frac{1}{2+2 K}}.
\end{array}
\end{equation*}
Recall that, by Proposition \ref{lemma-1.3},
\begin{equation*}
\begin{array}{lll} \displaystyle{}e^{\frac{C(1+K)}{(a+b^{2}/a)h_{0}}}&=&\displaystyle{}\left[[1+(a+b^{2}/a)C_{4}]\frac{\int_{T/2}^{T}\int_{B_{2R_{0}}}
|\varphi|^{2}\mathrm{d}x\mathrm{d}t}{\int_{B_{r}}|\varphi(x,T)|^{2}\mathrm{d}x}\right]^{\frac{C(1+K)}{C_3}}\\
\\
&&\times\displaystyle{}\left[\left(e^{[1+2C_{1}(1+\frac{2a+|b|/a}{r^2})]
(1+T^{-1}+\|V\|^{2/3}_{\infty})+\frac{4(a+b^{2}/a)^{-1}C_{3}}{T}
+2T\|V\|_{\infty}}\right)\right]^{\frac{C(1+K)}{C_3}}.
\end{array}
\end{equation*}
Finally, we obtain
\begin{eqnarray*}
\int_{B_{R}}|\varphi(x,T)|^{2}\mathrm{d}x&\leq& e^{\mathcal{K}\left[a^3+b^2+\frac{b^3}{a^2}+\left(\frac{b^2}{a}\right)^3
+\left(1+\frac{|b|}{a}+\frac{a}{a^2+b^2}\right)T^{-1}+\|V\|^{2/3}_{{\infty}}+2T\|V\|_{\infty}\right]}\\
\\
&&\times\left({\int_{T/2}^{T}\int_{B_{2R_{0}}}}|\varphi|^{2}\mathrm dx\mathrm ds\right)^{\beta}
\left(\int_{B_r} |\varphi(x,T)|^{2}\mathrm{d}x\right)^{1-\beta}
\end{eqnarray*}
for some positive constant $\mathcal{K}$, $\beta$ depending on $(R,\delta,r)$.

In summary, we finish the proof of this theorem.\qed

\section{Conclusion and remarks}\label{Conclusions}

This work investigates the quantitative unique continuation properties for the linear Ginzburg-Landau equation. The main novelty of the proposed method consists in constructing an appropriate parabolic frequency function and verifying its logarithmic convexity in time via Carleman-type commutator estimates. Using this convexity property, we directly establish an interpolation inequality for solutions of equation \eqref{1.1} (see Theorem~\ref{Thm1}), which is a key tool for the quantitative description of information propagation. We further generalize and improve upon previous results in multiple aspects. First, unlike \cite{Duan-Wang-Zhang} where the coefficient $b$ is restricted to zero, we allow $b$ to be a general function, thus extending the scope of applicability of the method. Second, building on \cite{Dou-Fu-Liao-Zhu}, we relax the restriction on spatial dimensions and weaken the regularity requirements for coefficients $a$ and $b$. Consequently, our conclusions hold for a wider class of parabolic equations. In particular, while \cite{Dou-Fu-Liao-Zhu} deals with nonlinear settings, the current study focuses on the linear Ginzburg-Landau equation, which paves the way for future studies on nonlinear counterparts.

Finally, we discuss several applications and raise several open questions for future research in the remarks below.

\begin{remark}
We present an application of Theorem~\ref{Thm1} to the null controllability from measurable sets in time (see, e.g., \cite{AEWZ, Phung-Wang}). First, based on Theorem~\ref{Thm1}, we derive an interpolation inequality for solutions of the Ginzburg-Landau equation at a single time instant, analogous to those established for parabolic equations in \cite{AEWZ,Duan-Wang-Zhang,Phung-Wang}. Next, we combine the scaling series method with this interpolation inequality to obtain an observability inequality for the Ginzburg-Landau equation, following the proof of \cite[Theorem 4]{Phung-Wang-Zhang}. Finally, by a standard duality method (see, e.g., \cite{WangGengsheng}) together with the observability inequality, we can obtain the null controllability result.
\end{remark}

\begin{remark}
A natural question is whether the null controllability holds for semilinear Ginzburg-Landau equations in $\mathbb{R}^N$ with control acting on an equidistributed set $\omega$. It is well known that the null controllability was proved for semilinear heat equations in $\mathbb{R}^N$ (see, e.g., \cite{Wang-Zhang}). Roughly speaking, the proof consists of three parts: (i) the null controllability of the linearized system; (ii) a uniform upper bound on the cost of controlling a semilinear heat equation on increasingly large domains; and (iii) a fixed-point argument. Inspired by the ideas in \cite{Wang-Zhang}, analogous results can be obtained.
\end{remark}

\section*{Appendix}\label{app}
\noindent\textbf{Proof\ of\ Lemma~\ref{lemma-1.1}}.
For simplicity we write  $B_{r}:= B_{r}(0)$ and  $B_{R}:=  B_{R}(0).$
Let $\eta\in C_{0}^{\infty}(B_{R})$ satisfy
\begin{equation}\label{2.21111}
0\leq\eta(\cdot)\leq 1 \ \mathrm{in} \ B_{R}, \ \eta(\cdot)=1 \ \mathrm{in} \ B_{r} \
\mathrm{and} \  |\nabla \eta(\cdot)|\leq C(R-r)^{-1}.
\end{equation}
Here and throughout the proof of Lemma~\ref{lemma-1.1},
$C$ denotes a generic positive constant. Let $\xi\in C^{\infty}(\mathbb{R})$ satisfy
\begin{equation}\label{2.31111}
0\leq\xi(\cdot)\leq1, \ |\xi'(\cdot)|\leq C(\tau_{2}-\tau_{1})^{-1} \
\mathrm{in} \ \mathbb{R},
  \end{equation}
\begin{equation}\label{2.41111}
\xi(\cdot)=0 \ \mathrm{in} \ (-\infty, T-\tau_{2}] \ \mathrm{and} \ \xi(\cdot)=1 \
\mathrm{in} \ [T-\tau_{1}, +\infty).
  \end{equation}
Multiplying the first equation of (\ref{1.1}) by $\eta^{2}\xi^{2}\overline{\varphi}$, we have that
\begin{equation}\label{H1}
\partial_{t}\varphi\eta^{2}\xi^{2}\overline{\varphi}-(a+ib)\Delta\varphi\eta^{2}\xi^{2}\overline{\varphi}+V\eta^{2}\xi^{2}\overline{\varphi}\varphi=0
\end{equation}
and
\begin{equation}\label{H2}
\partial_{t}\overline{\varphi}\eta^{2}\xi^{2}\varphi-(a-ib)\Delta\overline{\varphi}\eta^{2}\xi^{2}\varphi+\overline{V}\eta^{2}\xi^{2}\overline{\varphi}\varphi=0.
  \end{equation}
Let $\varphi:=\varphi_{1}+i\varphi_{2}$. By (\ref{H1}), (\ref{H2}) and integrating it over $B_{R}$, we obtain that
\begin{equation*}\label{H3}
\begin{array}{lll}
&&\displaystyle{}\int_{B_{R}}\frac{d}{dt}(\varphi\overline{\varphi})\eta^{2}\xi^{2}(t) \mathrm{d}x+2a\int_{B_{R}} \eta^{2}\xi^{2}\nabla\varphi\nabla\overline{\varphi} \mathrm{d}x\\
\\
&=&\displaystyle{-2}a\int_{B_{R}}(\nabla\varphi_{1}\nabla\eta\varphi_{1}+\nabla\varphi_{2}\nabla\eta\varphi_{2})\xi^{2}\eta\mathrm{d}x
+2b\int_{B_{R}}(\nabla\varphi_{2}\nabla\eta\varphi_{1}-\nabla\varphi_{1}\nabla\eta\varphi_{2})\xi^{2}\eta \mathrm{d}x\\
\\
&&\displaystyle{-}\int_{B_{R}} (V+\overline{V}) \eta^{2}\xi^{2}\varphi\overline{\varphi} \mathrm{d}x.
\end{array}
\end{equation*}
Integrating it over  $(T-\tau_{2},t)$ for  $t\in [T-\tau_{1},T]$
\begin{equation}\label{H3}
\begin{array}{lll}
&&\displaystyle{}\int_{B_{R}}\varphi\overline{\varphi}\eta^{2}\xi^{2} \mathrm{d}x+2a\int_{T-\tau_{2}}^{t}\int_{B_{R}} \eta^{2}\xi^{2}\nabla\varphi\nabla\overline{\varphi} \mathrm{d}x\mathrm{d}s\\
\\
&=&\displaystyle{}2\int_{T-\tau_{2}}^{t}\int_{B_{R}} \eta^{2}\xi\xi'\varphi\overline{\varphi} \mathrm{d}x\mathrm{d}s-2a\int_{T-\tau_{2}}^{t}\int_{B_{R}}(\nabla\varphi_{1}\nabla\eta\varphi_{1}+\nabla\varphi_{2}\nabla\eta\varphi_{2})\xi^{2}\eta\mathrm{d}x\mathrm{d}s\\
\\
&&\displaystyle{+}2b\int_{T-\tau_{2}}^{t}\int_{B_{R}}(\nabla\varphi_{2}\nabla\eta\varphi_{1}-\nabla\varphi_{1}\nabla\eta\varphi_{2})\xi^{2}\eta \mathrm{d}x\mathrm{d}s-\int_{T-\tau_{2}}^{t}\int_{B_{R}} (V+\overline{V}) \eta^{2}\xi^{2}\varphi\overline{\varphi} \mathrm{d}x\mathrm{d}s
\end{array}
\end{equation}
Applying Young's inequality to the term on the right hand of (\ref{H3}), we have that
\begin{eqnarray*}
&&\int_{B_{R}}\eta^{2}\xi^{2}|\varphi|^{2}\mathrm{d}x
+a\int_{T-\tau_{2}}^{t}\int_{B_{R}} \eta^{2}\xi^{2}|\nabla\varphi|^{2} \mathrm{d}x\mathrm{d}s\\
&\leq&\left(2a+\frac{|b|}{a}\right)\int_{T-\tau_{2}}^{t}\int_{B_{R}} |\nabla\eta|^{2}\xi^{2}|\varphi|^{2} \mathrm{d}x\mathrm{d}s
+2\int_{T-\tau_{2}}^{t}\int_{B_{R}} \eta^{2}|\xi'||\varphi|^{2} \mathrm{d}x\mathrm{d}s\\
&&+2\|V\|_{\infty}\int_{T-\tau_{2}}^{t}\int_{B_{R}}\eta^{2}\xi^{2}|\varphi|^{2} \mathrm{d}x\mathrm{d}s.
\end{eqnarray*}
This, along with  (\ref{2.21111})-(\ref{2.41111}), implies that
\begin{eqnarray*}
&&\int_{B_{r}}|\varphi(x,t)|^{2} \mathrm{d}x+a\int_{T-\tau_{1}}^{t}\int_{B_{r}} |\nabla\varphi|^{2} \mathrm{d}x\mathrm{d}s\\
\\
&\leq&C\left[\left(2a+\frac{|b|}{a}\right)(R-r)^{-2}+(\tau_{2}-\tau_{1})^{-1}+\|V\|_{\infty}\right]\\
\\
&&\times\int_{T-\tau_{2}}^{T}\int_{B_{R}}|\varphi|^{2} \mathrm{d}x\mathrm{d}s\ \  \mathrm{for\ each} \ t\in [T-\tau_{1},T].
\end{eqnarray*}
Hence, (\ref{1.2}) follows from the last inequality immediately.
\qed
\\

\noindent\textbf{Proof\ of\ Lemma~\ref{lemma-1.2}}.
For each $r'>0,$ we write $B_{r'}:=  B_{r'}(0)$.  Let $\eta\in C_{0}^{\infty}(B_{4R/3})$ satisfy
\begin{equation}\label{2.61111}
0\leq \eta(\cdot)\leq 1,\ |\nabla\eta(\cdot)|\leq CR^{-1}, \ |\Delta \eta(\cdot)|\leq CR^{-2}\ \mathrm{ in} \ B_{4R/3}
\end{equation}
and
\begin{equation}\label{2.61112}
\eta(\cdot)=1 \ \mathrm{ in} \ B_{R}.
\end{equation}
Here and throughout the proof of Lemma~\ref{lemma-1.2}, $C$ denotes a generic positive constant.
Let $\xi\in C^{\infty}(\mathbb{R})$ satisfy
\begin{equation}\label{2.61113}
0\leq \xi(\cdot)\leq1,\ |\xi'(\cdot)|\leq C\tau^{-1} \ \mathrm{ in} \ \mathbb{R},
\end{equation}
\begin{equation}\label{2.61114}
\xi(\cdot)=0 \ \mathrm{in} \ (-\infty, T-4\tau/3] \ \mathrm{and } \ \xi(\cdot)=1  \ \mathrm{in} \ [T-\tau, +\infty).
\end{equation}
Denote by $z:=\eta\xi\varphi$. It is easy to check that
\begin{equation}\label{2.71111}
\left\{
\begin{array}{lll}
\partial_{t}z-(a+ib)\Delta z=[\eta\xi'-(a+ib)\xi\Delta\eta-V\eta\xi]\varphi-2(a+ib)\xi\nabla\eta\cdot\nabla\varphi&\mathrm{in}\ B_{4R/3}\times (0,T),\\
z=0&\mathrm{on}\ \partial B_{4R/3}\times (0,T),\\
z(T-4\tau/3)=0&\mathrm{in}\ B_{4R/3}.\\
\end{array}
\right.
\end{equation}
On one hand, for each $t\in[T-\tau, T],$ we have that
\begin{equation*}
\begin{array}{lll}
&&-\displaystyle{}\int^{t}_{T-4\tau/3}\int_{B_{4R/3}}(a-ib)\Delta \overline{z} \partial_{s}z+(a+ib)\Delta z \partial_{s}\overline{z}\mathrm{d}x\mathrm{d}s\\
&=&\displaystyle{}a\int_{B_{4R/3}}|\nabla z(x,t)|^{2}\mathrm{d}x-a\int_{B_{4R/3}}|\nabla z(x,T-4\tau/3)|^{2}\mathrm{d}x,
\end{array}
\end{equation*}
which indicates that
$$
a\int_{B_{4R/3}}|\nabla z(x,t)|^{2}\mathrm{d}x\leq
\int^{t}_{T-4\tau/3}\int_{B_{4R/3}}|\partial_{s}z-(a+ib)\Delta z|^2\mathrm{d}x\mathrm{d}s
\;\;\mbox{for each}\;\;t\in[T-\tau, T].
$$
This, along with (\ref{2.61112}) and the second relation of (\ref{2.61114}), implies that
\begin{equation}\label{1.4}
\displaystyle{\max_{t\in[T-\tau,T]}}\int_{B_{R}}|\nabla \varphi(x,t)|^{2}\mathrm{d}x\leq
\frac{1}{a}\int^{T}_{T-4\tau/3}\int_{B_{4R/3}}|\partial_{s}z-(a+ib)\Delta z|^2\mathrm{d}x\mathrm{d}s.
\end{equation}
On the other hand, we have that
\begin{equation}\label{1.5}
\begin{array}{lll}
&&\displaystyle{}\frac{1}{a}\int_{T-4\tau/3}^{T}\int_{B_{4R/3}}
\left|[\eta\xi'-(a+ib)\xi\Delta\eta-V\eta\xi]\varphi-2(a+ib)\xi\nabla\eta\cdot\nabla\varphi\right|^{2}\mathrm{d}x\mathrm{d}t\\
\\
&\leq&\displaystyle{\frac{8}{a}}\int_{T-4\tau/3}^{T}\int_{B_{4R/3}}\left[\eta^2|\xi'|^2+(a^2+b^2)\xi^2|\Delta \eta|^2+\|V\|_{\infty}^{2}\eta^2\xi^2\right]|\varphi|^{2}
\mathrm{d}x\mathrm{d}t\\
\\
&&+\displaystyle{\frac{8}{a}}\int_{T-4\tau/3}^{T}\int_{B_{4R/3}}(a^2+b^2)\xi^2|\nabla \eta|^2|\nabla \varphi|^{2}\mathrm{d}x\mathrm{d}t.
\end{array}
\end{equation}
By (\ref{1.5}), (\ref{2.61111}) and (\ref{2.61113}), we get that
\begin{equation}\label{1.6}
\begin{array}{lll}
&&\displaystyle{\frac{1}{a}}\int_{T-4\tau/3}^{T}\int_{B_{4R/3}}
\left|[\eta\xi'-(a+ib)\xi\Delta\eta-V\eta\xi]\varphi-2(a+ib)\xi\nabla\eta\cdot\nabla\varphi\right|^{2}\mathrm{d}x\mathrm{d}t\\
\\
&\leq&\displaystyle{C}\left[a^{-1}\tau^{-2}+(a^2+b^2)a^{-1}R^{-4}+a^{-1}\|V\|^{2}_{\infty}\right]
\int_{T-4\tau/3}^{T}\int_{B_{4R/3}}|\varphi|^2\mathrm{d}x\mathrm{d}t\\
\\
&&+\displaystyle{}C(a^2+b^2)a^{-1}R^{-2}\int_{T-4\tau/3}^{T}\int_{B_{4R/3}}|\nabla\varphi|^2\mathrm{d}x\mathrm{d}t.
\end{array}
\end{equation}
According to (\ref{1.2}) (where $r, R, \tau_{1}$ and $\tau_{2}$ are replaced by
$4R/3, 2R, 4\tau/3$ and $2\tau$, respectively), it is clear that
$$a\int_{T-4\tau/3}^{T}\int_{B_{4R/3}}| \nabla\varphi|^{2}\mathrm{d}x\mathrm{d}t\leq C\left[\tau^{-1}+\left(2a+\frac{|b|}{a}\right)R^{-2}+\|V\|_{\infty}\right]\int_{T-2\tau}^{T}\int_{B_{2R}}|\varphi|^{2}\mathrm{d}x\mathrm{d}t.$$
This, along with (\ref{1.6}), implies that
\begin{eqnarray*}
&&\displaystyle{}\int_{T-4\tau/3}^{T}\int_{B_{4R/3}}
\left|[\eta\xi'-(a+ib)\xi\Delta\eta-V\eta\xi]\varphi-2(a+ib)\xi\nabla\eta\cdot\nabla\varphi\right|^{2}\mathrm{d}x\mathrm{d}t\\
\\
&\leq&C\left[a^{-1}\tau^{-2}+(a^2+b^2)a^{-1}R^{-4}+a^{-1}\|V\|^{2}_{\infty}\right]\int_{T-2\tau}^{T}\int_{B_{2R}}|\varphi|^{2}\mathrm{d}x\mathrm{d}t\\
\\
&&+C(a^2+b^2)a^{-2}R^{-2}\left[\tau^{-1}+\left(2a+\frac{|b|}{a}\right)R^{-2}+\|V\|_{\infty}\right]\int_{T-2\tau}^{T}\int_{B_{2R}}|\varphi|^{2}\mathrm{d}x\mathrm{d}t\\
\\
&\leq&C\left[1+a^{-1}\tau^{-2}+(a^2+b^2)^{2}a^{-3}R^{-4}+a^{-1}\|V\|^{2}_{\infty}\right]\int_{T-2\tau}^{T}\int_{B_{2R}}|\varphi|^{2}\mathrm{d}x\mathrm{d}t.
\end{eqnarray*}
Hence, (\ref{1.3}) follows from (\ref{1.4}), the first equation of (\ref{2.71111}) and the latter inequality immediately.
\qed

\end{document}